\definecolor{hot}{RGB}{65,105,225}
\newcommand{\ExtMinSpace}{\mkern -1mu}
\newcommand{\RelMinspace}{\mkern-5mu}
\renewcommand{\sc}[1]{\scaleto{#1}{8pt}}
\newcommand{\C}{\ensuremath{\mathbb{C}}}
\newcommand{\D}{\ensuremath{\mathscr{D}}}
\newcommand{\U}{\ensuremath{\mathscr{U}}}
\renewcommand{\O}{\ensuremath{\mathcal{O}}}
\newcommand{\M}{\ensuremath{\mathscr{M}}}
\newcommand{\N}{\ensuremath{\mathscr{N}}}
\newcommand{\blank}{{-}}
\newcommand{\gr}{\ensuremath{\operatorname{gr}}}
\newcommand{\grrel}{\ensuremath{\gr^{rel}\RelMinspace}}
\newcommand{\Ch}{\ensuremath{\operatorname{Ch}}}
\newcommand{\Chrel}{\Ch^{rel}\RelMinspace}
\newcommand{\abs}[1]{\ensuremath{\vert #1 \vert}}
\DeclareMathOperator{\Spec}{Spec}
\DeclareMathOperator{\Ann}{Ann}
\DeclareMathOperator{\LCT}{LCT}
\DeclareMathOperator{\RLCT}{RLCT}
\DeclareMathOperator{\lct}{lct}
\DeclareMathOperator{\ord}{ord}
\DeclareMathOperator{\KLT}{LCT}
\DeclareMathOperator{\RKLT}{RLCT}
\newcommand{\Ext}{\ensuremath{\mathcal{E}\ExtMinSpace xt}}
\newcommand{\Hom}{\ensuremath{\mathcal{H}om}}
\newcommand{\Ex}[2]{\Ext_{\sc{#1}}^{#2}}
\def\bR{\mathbb{R}}
\def\bZ{\mathbb{Z}}
\def\bQ{\mathbb{Q}}
\def\bN{\mathbb{N}}
\def\bC{\mathbb{C}}
\def\pa{\partial}
\def\sD{\mathscr{D}}
\def\ra{\rightarrow}
\def\lam{\lambda}
\def\al{\alpha}
\def\be{\begin{equation}}
\def\ee{\end{equation}}
\def\bk{\mathbf k}
\def\cO{\mathcal O}
\def\L{\mathscr{L}}
\def\cJ{\mathcal J}
\newtheorem{theorem}{Theorem}[section]
\newtheorem{corollary}[theorem]{Corollary}
\newtheorem{lemma}[theorem]{Lemma}
\newtheorem{proposition}[theorem]{Proposition}
\theoremstyle{definition}
\newtheorem{definition}[theorem]{Definition}
\newtheorem{remark}[theorem]{Remark}
\title[Estimates for zero loci of Bernstein-Sato ideals]{Estimates for zero loci of Bernstein-Sato ideals}
\author{Nero Budur}\address{KU Leuven, Department of Mathematics, Celestijnenlaan 200B, 3001 Leuven, Belgium, and BCAM, Mazarredo 14, 48009 Bilbao, Spain}
\email{nero.budur@kuleuven.be}
\author{Robin van der Veer}\address{KU Leuven, Department of Mathematics,  Celestijnenlaan 200B, 3001 Leuven, Belgium}\email{robinvdveer@gmail.com}
\author{Alexander Van Werde}\address{KU Leuven, Department of Mathematics,  Celestijnenlaan 200B, 3001 Leuven, Belgium}
\curraddr{Eindhoven University of Technology, Department of Mathematics and Computer Science, PO Box 513, 5600 MB Eindhoven, Netherlands}
\email{a.van.werde@tue.nl}
\keywords{Bernstein-Sato ideal, $\sD$-module, log resolution.}
\subjclass{14F10, 32C38, 32S45.}
\begin{document}

\begin{abstract}
We give estimates for the zero loci of Bernstein-Sato ideals.
An upper bound is proved as a multivariate generalisation of the upper bound by Lichtin for the roots of Bernstein-Sato polynomials.
The lower bounds generalise the fact that log-canonical thresholds, small jumping numbers of multiplier ideals, and their real versions provide roots of Bernstein-Sato polynomials.
\end{abstract}

\maketitle

\section{Introduction}
Let $F = (f_1,\ldots,f_r)$ with $f_j\in \bC[x_1,\ldots,x_n]$ be a tuple of polynomials, and $r>0$. Introduce new variables $s = (s_1,\ldots, s_r)$ and fix a tuple of natural numbers $a = (a_1,\ldots, a_r) \in \bN^r$ such that the product $f_1^{a_1}\ldots f_r^{a_r}$ admits zeros on $X=\bC^n$.
By definition, the {\it Bernstein-Sato ideal} $B_F^{a}$  consists of all polynomials $b(s)\in \C[s]$ such that
$$b(s) F^s \in \D_X[s]F^{s+a}$$
where $F^s = f_1^{s_1}\cdots f_r^{s_r}$,  $\D_X=\bC[x]\langle\pa\rangle$ is the ring of algebraic differential operators on $X$, with $x=x_1,\ldots,x_n$, $\pa=\pa_1,\ldots,\pa_n$, and $\pa_i=\pa/\pa x_i$ for $i=1,\ldots,n$. Here $\D_X[s]F^{s+a}$ is the $\sD_X[s]$-submodule of the free $\bC[x,f^{-1},s]$-module $\bC[x,f^{-1},s]F^{s+a}$ obtained by applying formally the operators in $\sD_X[s]$ to the symbol $F^{s+a}$ by using the usual derivation rules, where $f=f_1\ldots f_r$. The zero locus of the ideal $B_F^a$ is denoted $$Z(B_F^a)\subseteq \C^r.$$
This construction extends easily to the case when $F:X\to \bC^r$ is a morphism from a smooth affine complex algebraic variety, and also, by using analytic differential operators, to the case when $F:(X,x)\to (\bC^r,0)$ is the germ of a holomorphic map of complex manifolds. The latter are the so-called {\it local Bernstein-Sato ideals} $B_{F,x}^a$, and the former, $B_F^a$, equal the intersection of all local $B_{F,x}^a$ for $x$ in the zero locus of $f$. In the classical case $r=1=a$, the ring $\C[s]$ is a principal ideal domain and the unique monic generator $b_f(s)$ of $B_F^{1}$ is called the {\it Bernstein-Sato polynomial} of $F=f$. The Bernstein-Sato ideals measures in some sense the singularities of the mapping $F$, since, for example, $B_{F,x}^{\,\mathbf{1}}=\langle (s_1+1)\ldots(s_r+1)\rangle$ if and only if $F:(X,x)\to (\bC^r,0)$ is smooth by \cite[Proposition 1.2]{BM}, where $\mathbf{1}=(1,\ldots,1)$. One has:

\begin{theorem} \label{thrmMoreA}  (\cite[Theorem 1.1.1]{budur2020zeroII}) Let $F=(f_1,\ldots,f_r):X\ra\bC^r$ be a morphism of smooth complex affine irreducible algebraic varieties, or the germ at $x\in X$ of a holomorphic map on a complex manifold.  Let $a\in\bN^r$ such that $\prod_{j=1}^rf_j^{a_j}$ is not invertible as a holomorphic function on $X$. Then:
\begin{enumerate}
\item Every irreducible component of $Z(B_F^{a})$ of codimension 1 is a  hyperplane of type $l_1s_1+\ldots+l_rs_r+b=0$ with $l_j\in\bQ_{\ge 0}$, $b\in\bQ_{>0}$, and for each such hyperplane there exists $j$ with $a_j\ne 0$ such that $l_j>0$.
\item Every irreducible component of  $Z(B_F^{a})$ of codimension $>1$  can be translated by an element of $\bZ^r$ inside a component of codimension 1.
\end{enumerate}
\end{theorem}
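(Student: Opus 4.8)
The plan is to derive the theorem from two ingredients obtained via log resolutions: a several‑variable \emph{upper bound} for $Z(B_F^a)$ in the style of Lichtin's bound for $b_f$, which settles part~(1) and reduces part~(2) to deciding which hyperplanes genuinely occur; and complementary \emph{lower bounds} coming from log‑canonical thresholds and jumping numbers of mixed multiplier ideals, which produce enough actual components to finish part~(2). As a harmless first step one may localize: since $B_F^a=\bigcap_x B_{F,x}^a$ and $x\mapsto B_{F,x}^a$ takes only finitely many values, $Z(B_F^a)$ is a finite union of the $Z(B_{F,x}^a)$, so every irreducible component of $Z(B_F^a)$ is a component of some $Z(B_{F,x}^a)$ — though the resolution below is global and this step is optional.

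\emph{Part (1).} Fix a log resolution $\mu\colon Y\to X$ of $\op{div}(f_1\cdots f_r)$ with simple normal crossings divisor $\bigcup_i E_i$ (exceptional divisors and strict transforms), and set $N_{i,j}=\ord_{E_i}(f_j\circ\mu)\in\bN$ and $\kappa_i=\ord_{E_i}(K_{Y/X})\in\bN$. The estimate to establish is
\[
Z(B_F^a)\ \subseteq\ \bigcup_{i\,:\,\sum_j a_jN_{i,j}>0}\ \ \bigcup_{c\in\bZ_{\ge 1}}\ \Big\{\ \textstyle\sum_{j=1}^r N_{i,j}\,s_j+\kappa_i+c=0\ \Big\},
\]
the multivariate analogue of Lichtin's bound: one pulls the functional equation $b(s)F^s=P(s)F^{s+a}$ back to $Y$, works in a chart where the configuration is monomial so that $\mu^*F^{s+a}\sim\prod_l y_l^{\sum_j N_{i_l,j}(s_j+a_j)}$, computes the Bernstein--Sato ideal of such a monomial relative to its monomial shift directly, and descends the divisibility along $\mu_+$; an $E_i$ along which the shift $F^{s+a}/F^s$ is a unit imposes no condition, which is why only the indices with $\sum_j a_jN_{i,j}>0$ survive. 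Granting this, every hyperplane on the right is rational with nonnegative normal vector $(N_{i,1},\dots,N_{i,r})$ and constant term $\kappa_i+c\ge 1>0$, and the whole family is locally finite in $\bC^r$; since an irreducible $\bC$‑variety contained in a locally finite union of proper closed subvarieties lies in one of them, every codimension‑$1$ component of $Z(B_F^a)$ \emph{equals} one such hyperplane $\{\sum_j N_{i,j}s_j+\kappa_i+c=0\}$. Reading off $l_j=N_{i,j}\in\bQ_{\ge 0}$ and $b=\kappa_i+c\in\bQ_{>0}$ gives the required shape, and $\sum_j a_jN_{i,j}>0$ forces an index $j$ with $a_j\neq0$ and $l_j=N_{i,j}>0$.

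\emph{Part (2).} Here one must sharpen the bound so that it respects the stratification of $\bigcup_i E_i$: a single $E_i$ contributes only codimension‑$1$ walls, while components of codimension $>1$ come from the higher direct images $R^{q}\mu_*$ with $q>0$, which are supported on the intersection strata $E_I=\bigcap_{i\in I}E_i$, $|I|\ge 2$. A Koszul/toric computation of the contribution of such a stratum — in which the powers of $\pa_{t_1},\dots,\pa_{t_r}$ occurring in the pushforward formula produce an explicit element of $\bZ^r$ — shows it to be an integer translate of a subset of $\bigcap_{i\in I'}\{\sum_j N_{i,j}s_j+\kappa_i+c_i=0\}$ for some $I'\subseteq I$ and $c_i\ge1$. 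Thus a codimension‑$>1$ component $W$ satisfies $W+m\subseteq\{\sum_j N_{i_0,j}s_j+\kappa_{i_0}+c_{i_0}=0\}$ for some $i_0$ and $m\in\bZ^r$. It then remains to see that this hyperplane, after a further $\bZ^r$‑translation if needed to reconcile $c_{i_0}$ with the greatest common divisor of the $N_{i_0,j}$, is a genuine codimension‑$1$ component of $Z(B_F^a)$; this is what the lower bounds provide, exhibiting a jumping wall $\sum_j N_{i_0,j}s_j+\kappa_{i_0}+1=0$ of a mixed multiplier ideal of $F$ as a component of $Z(B_F^a)$ — the several‑variable form of ``small jumping numbers are roots of $b_f$''. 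Then $W+m$ sits inside this codimension‑$1$ component.

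\emph{The main obstacle.} In both parts the crux is the descent along $\mu_+$. For part~(1) it is the classical difficulty in Lichtin's argument: controlling the functional equation when $R^{>0}\mu_*\neq0$. For part~(2) one must instead use this phenomenon with enough precision to see that a higher‑codimension component is an \emph{integer} translate of a codimension‑$1$ wall — the arithmetic reconciliation with the denominators coming from the $N_{i,j}$ is the delicate point — and one must pin down which walls actually occur as components, which is the role of the lower bounds. A parallel route to part~(2) runs through Sabbah's conjecture for Bernstein--Sato ideals (a theorem of Budur--van der Veer--Wu--Zhou): the exponential image $\op{Exp}(Z(B_F^a))$ lies in the support of the Sabbah specialization complex, a finite union of torsion‑translated subtori of $(\bC^*)^r$, whose $\op{Exp}$‑preimages are $\bZ^r$‑invariant unions of rational affine subspaces; matching these against the codimension‑$1$ hyperplanes of part~(1) again yields the translation statement.
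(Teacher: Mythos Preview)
This theorem is not proved in the present paper at all: it is quoted from \cite[Theorem~1.1.1]{budur2020zeroII} as a known result and used as input. In particular, the paper's Lichtin-type estimate (Theorem~\ref{thm: MainTheorem}) is proved \emph{using} Theorem~\ref{thrmMoreA}(1) --- the slope estimate is invoked in Section~\ref{secUB} to choose the auxiliary polynomial $r(s)$ so that $r(s+a)$ misses every codimension-one component of $Z(B_{F,x}^a)$. Your plan to deduce part~(1) from the Lichtin bound is therefore circular unless you supply an independent proof of that bound, which you do not: you sketch the monomial computation upstairs and then write ``descends the divisibility along $\mu_+$'', acknowledging in your final paragraph that this descent is precisely the unresolved obstacle.

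Your primary approach to part~(2) has a further genuine gap. You argue that a codimension~$>1$ component $W$ can, after an integer translation, be placed inside a hyperplane $H_{i_0}=\{\sum_j N_{i_0,j}s_j+\kappa_{i_0}+c_{i_0}=0\}$ attached to some divisor $E_{i_0}$, and you then appeal to the jumping-wall lower bounds to assert that (a translate of) $H_{i_0}$ is an actual codimension-one component of $Z(B_F^a)$. But Theorem~\ref{thm: JumpingWall} and Corollary~\ref{thm: LCT} only produce those hyperplanes that are supported on facets of jumping walls of $F$; an arbitrary irreducible component $E_{i_0}$ of $\mu^*D$ need not cut out such a facet, and there is no mechanism in your argument forcing the $E_{i_0}$ singled out by the stratified computation to be one that does. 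The ``parallel route'' you mention at the end --- via the identification of $\operatorname{Exp}(Z(B_F^a))$ with a union of torsion-translated subtori coming from the Sabbah specialization complex --- is in fact the route actually taken in \cite{budur2020zeroII}; part~(2) is obtained there from that topological description, not from resolution estimates paired with multiplier-ideal lower bounds.
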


For $r=1$ statement (2) is vacuous and (1) is equivalent to the classical result that the roots of the Bernstein-Sato polynomial $b_f$ are negative rational numbers, due to Kashiwara \cite{kashiwara1976b}. The first part without the strict positivity of $l_j$ is due to Sabbah \cite{sabbah1987proximite} and Gyoja \cite{gyoja1993bernstein}. The second part for the case $a=\mathbf{1}$ is due to Maisonobe \cite{maisonobe2016filtration}, a completely different proof of which was given recently by van der Veer \cite{robin}.

The first purpose of this paper is to further refine part (1) of the above theorem in terms of numerical data from log resolutions.
Let $\mu:Y\to X$ be a strong log resolution of $f$.
This means that $\mu$ is a projective morphism that is an isomorphism over the complement of $D$, the divisor defined by $f$, such that $Y$ is smooth and $\mu^*D$ is a simple normal crossings divisor.
The numerical data we refer to are the orders of vanishing $\ord_{E}(f_j)\in\bN$ of $f_j$ along  irreducible components $E$ of $\mu^*D$, and the orders of vanishing $k_E=\ord_{E}(\det \operatorname{Jac}(\mu))\in\bN$ of the determinant of the Jacobian of $\mu$, also equal to the coefficients of the relative canonical divisor $K_\mu$ of $\mu$. We show:

\begin{theorem}\label{thm: MainTheorem}
  Every irreducible component of $Z(B_F^a)$ of codimension $1$ is a hyperplane of the form
  $$\ord_{E}(f_1)s_1 + \cdots + \ord_{E}(f_r)s_r + k_E + c=0 $$
with $c\in\bZ_{>0}$.
\end{theorem}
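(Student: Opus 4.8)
The plan is to prove the theorem by producing a single nonzero polynomial $\tilde b(s)\in B_F^a$ whose zero locus is contained in the finite union of hyperplanes
\[
\bigcup_{E}\ \bigcup_{c=1}^{\,\sum_j\ord_E(f_j)a_j}\bigl\{\,\ord_E(f_1)s_1+\cdots+\ord_E(f_r)s_r+k_E+c=0\,\bigr\},
\]
with $E$ ranging over the irreducible components of $\mu^*D$; this is the several-variables analogue of Lichtin's argument for the roots of $b_f$. Granting such a $\tilde b$, the theorem is immediate: an irreducible component $H$ of $Z(B_F^a)$ of codimension $1$ is contained in this union, hence, being irreducible, inside one of the hyperplanes, which has dimension $r-1=\dim H$, so $H$ equals it; in particular the constant $c$ is a positive integer, which is the assertion (and it refines part (1) of Theorem~\ref{thrmMoreA}). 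Since $B_F^a=\bigcap_x B_{F,x}^a$, it suffices to verify $\tilde b\in B_{F,x}^a$ for each $x$ in the zero locus of $f$, so we may work locally over $\mu^{-1}(x)$ throughout.

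The link with the resolution is the $\sD$-module direct image along $\mu$. Write $G=F\circ\mu=(g_1,\dots,g_r)$ and $D_Y=\mu^{-1}(D)$. Because $\mu$ is proper and an isomorphism over $X\setminus D$, the degree-zero direct image along $\mu$ of $\cO_Y(*D_Y)G^s$ is identified with $\cO_X(*D)F^s$, and --- exactly as when $r=1$ --- the relative canonical divisor $K_\mu$ enters because the transfer between $\sD_Y$- and $\sD_X$-modules carries along the canonical bundles: the distinguished generator $F^s\otimes dx$ on $X$ corresponds to $G^s\otimes\mu^*dx$ on $Y$. Hence any functional equation
\[
\tilde b(s)\,\bigl(G^s\otimes\mu^*dx\bigr)\in\sD_Y[s]\,\bigl(G^{s+a}\otimes\mu^*dx\bigr)
\]
on $Y$ descends to $\tilde b(s)F^s\in\sD_X[s]F^{s+a}$, i.e. $\tilde b\in B_F^a$. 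The problem is thereby reduced to producing such an equation locally on $Y$.

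Near a point $p\in Y$ choose coordinates $y_1,\dots,y_n$ adapted to the simple normal crossings divisor $D_Y$, so that the components of $D_Y$ passing through $p$ are $E_k=\{y_k=0\}$ for $k=1,\dots,m$. Then $g_j=u_j\prod_{k\le m}y_k^{N_{jk}}$ with $u_j$ units and $N_{jk}=\ord_{E_k}(f_j)$, while $\mu^*dx=V\prod_{k\le m}y_k^{k_{E_k}}\,dy$ with $V$ a unit, so that $G^s\otimes\mu^*dx=W(s)\prod_{k\le m}y_k^{\langle N_k,s\rangle+k_{E_k}}\otimes dy$, where $\langle N_k,s\rangle=\sum_j N_{jk}s_j$ and $W(s)=V\prod_j u_j^{s_j}$ is a unit. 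Disregarding $W(s)$ for the moment, apply $\prod_{k\le m}\partial_{y_k}^{\langle N_k,a\rangle}$ to $G^{s+a}\otimes\mu^*dx$: each $\partial_{y_k}$ touches only $y_k$, it lowers the exponent $\langle N_k,s\rangle+\langle N_k,a\rangle+k_{E_k}$ down to $\langle N_k,s\rangle+k_{E_k}$, and contributes the falling factorial $\prod_{i=0}^{\langle N_k,a\rangle-1}(\langle N_k,s\rangle+\langle N_k,a\rangle+k_{E_k}-i)$, which after the substitution $c=\langle N_k,a\rangle-i$ equals $\prod_{c=1}^{\langle N_k,a\rangle}(\langle N_k,s\rangle+k_{E_k}+c)$. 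Thus
\[
\tilde b(s)=\prod_{k=1}^{m}\ \prod_{c=1}^{\,\sum_j N_{jk}a_j}\Bigl(\textstyle\sum_{j=1}^r\ord_{E_k}(f_j)\,s_j+k_{E_k}+c\Bigr)
\]
solves the equation near $p$. Taking the product of these over all components $E$ of $\mu^*D$ yields one $\tilde b$ that works near every point and has the desired zero locus. The unit $W(s)$ is dealt with as in Lichtin's treatment of $f=(\text{unit})\cdot(\text{monomial})$: with $\theta_k=y_k\partial_{y_k}$ one has $\theta_k(G^s\otimes\mu^*dx)=(\langle N_k,s\rangle+k_{E_k}+\eta_k)(G^s\otimes\mu^*dx)$ with $\eta_k:=y_k\partial_{y_k}\log W(s)$ holomorphic and divisible by $y_k$, and the $\eta_k$-corrections to the computation above are cleared by a descending induction on the multi-order (alternatively, whenever some $N_{jk}>0$ one may first absorb the units into the $y_k$).

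I expect the principal obstacle to be making the middle step rigorous: that the relevant $\sD_Y[s]$-module direct image is concentrated in degree zero and that the generators correspond with exactly the $K_\mu$-twist indicated --- here one invokes that $\mu$ is birational and proper together with the structure theory of relative holonomic $\sD$-modules --- and, in the local computation, controlling the unit factor $W(s)=V\prod_j u_j^{s_j}$, whose several contributions $u_j^{s_j}$ must be handled simultaneously. By contrast, the combinatorics yielding $c\in\{1,\dots,\sum_j\ord_E(f_j)a_j\}\subset\bZ_{>0}$ and the globalization over the $E$'s are routine.
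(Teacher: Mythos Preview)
Your local computation on the resolution is correct and is precisely the paper's Lemma~\ref{lem: BernsteinSatoPolynomialUpstairs}. The genuine gap is the descent step: the assertion that a functional equation $\tilde b(s)(G^s\otimes\mu^*dx)\in\sD_Y[s](G^{s+a}\otimes\mu^*dx)$ on $Y$ ``descends'' to $\tilde b\in B_F^a$ is unjustified, and this is exactly where the real work lies. Your identification $H^0\mu_+\bigl(\cO_Y(*D_Y)G^s\bigr)\simeq\cO_X(*D)F^s$ holds for the \emph{fully localized} modules, but there the shift $t$ is invertible and the Bernstein--Sato ideal is the unit ideal, so nothing is gained. One must work with the cyclic submodules $\M=G^s\mu^*(dx)\cdot\sD_Y[s]$ and $\N=F^sdx\cdot\sD_X[s]$. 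Functoriality of the direct image does yield $(H^0\mu_+\M)\,\tilde b(s)\subset (H^0\mu_+\M)\,t$; the image $u$ of the generator spans a $\sD_X[s]\langle t\rangle$-submodule $\U\subset H^0\mu_+\M$ that surjects onto $\N$ (Lemma~\ref{lem: SurjectionUF}); but in general $\U\subsetneq H^0\mu_+\M$, so one cannot conclude $\U\,\tilde b(s)\subset\U\, t$, and hence not $\tilde b\in B_F^a$. The obstacles you anticipate (degree-zero concentration, matching of generators, the unit $W(s)$) are not the real issue.

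The paper controls the obstruction $\L=(H^0\mu_+\M)/\U$ as follows. It is relative holonomic, and the key technical Lemma~\ref{gradeBound} shows $j(\L)>n$, equivalently $\Ann_{\C[s]}\L\ne 0$; this is where relative holonomicity replaces ordinary holonomicity for $r>1$, the proof going via specialization at very negative integral $\alpha$ and Riemann--Hilbert. After stabilizing the chain of ideals $B_{\L t^k}$ at some $N$, one chooses nonzero $r(s)\in B_{\L t^N}$, iterates the upstairs polynomial to $B(s)=\prod_{j=0}^N b(s+ja)$, and obtains $B(s)r(s+a)\in B_{\U/\U t}\subseteq B_F^a$. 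Finally Theorem~\ref{thrmMoreA} is invoked to choose $r$ so that $r(s+a)$ misses every codimension-one component of $Z(B_F^a)$. A telling consequence: the method gives no effective bound on $c$, and the paper states explicitly that bounding $c$ for $r>1$ is open. Your argument, were it valid, would yield $c\le\sum_j\ord_E(f_j)a_j$ --- strictly sharper than anything known --- which is itself a strong indication that the descent shortcut fails.
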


Without the term $k_E$,  the statement was proven for $r=1$ by Kashiwara \cite{kashiwara1976b} and for $r\ge 1$ by \cite[Lemma 4.4.6]{budur2020zeroII}. The case $r=1$ of Theorem \ref{thm: MainTheorem} is due to Lichtin \cite{lichtin1989poles}, a new proof of which was given by Dirks-Musta\c{t}\u{a} \cite{DM}. 

If $r=1$, the upper bound $c< (n+a-1)N_E-k_E$ for $c$ as in Theorem \ref{thm: MainTheorem} can be deduced from  \cite[Theorem 0.4]{S-m}. For $r>1$ the problem of finding an upper bound for $c$ is open. In some cases this is known, e.g. \cite[Theorem 1]{Ma-free}, \cite[Theorem 1.9]{Ba}.

The second part of this paper contains a number of lower bounds for the Bernstein-Sato zero locus.
Firstly, one has an easy multivariate generalisation for the fact that the Bernstein-Sato polynomial $b_f(s)$, which corresponds to the case $r=1=a$, always has $-1$ as a root.

\begin{proposition}\label{prop1.3}
  Let $C$ be an irreducible component of $D$ such that $m:=\sum_{j=1}^{r}\ord_{C}(f_j)a_j\neq 0$.
  Then $\left(\sum_{j=1}^r \ord_{C}(f_j)s_j \right)+ c = 0$ determines an irreducible component of $Z(B_F^a)$ for $c = 1,\ldots, m$.
  \end{proposition}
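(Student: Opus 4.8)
The plan is to imitate the classical argument that $-1$ is always a root of $b_f$. Write $m_j:=\ord_C(f_j)$ and, for $c\in\{1,\dots,m\}$, let $H_c\subseteq\bC^r$ be the hyperplane $\sum_j m_js_j+c=0$ (a genuine hyperplane, since the hypothesis $m\ne 0$ forces $(m_1,\dots,m_r)\ne 0$). It suffices to show $H_c\subseteq Z(B_F^a)$: as $B_F^a\ne 0$, the locus $Z(B_F^a)$ is a proper subvariety of $\bC^r$, and an irreducible hypersurface contained in it is then automatically an irreducible component. Since $B_F^a$ is the intersection of the local ideals $B_{F,x}^a$, one has $Z(B_{F,p}^a)\subseteq Z(B_F^a)$ for every $p$; so it is enough to prove $H_c\subseteq Z(B_{F,p}^a)$ for one conveniently chosen $p$, and I would take $p$ a general point of $C$.

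I would then localize analytically at such a $p$. Genericity of $p$ on $C$ provides local analytic coordinates $x_1,\dots,x_n$ at $p$ with $C=\{x_1=0\}$ and $f_j=u_jx_1^{m_j}$, all $u_j$ units at $p$. Fix $b\in B_{F,p}^a$ with a functional equation $b(s)F^s=P(s)F^{s+a}$, $P\in\sD_{X,p}[s]$; the goal is that $b$ vanishes on $H_c$. Pick $\lambda\in H_c$, so $\sum_jm_j\lambda_j=-c$. Viewing the functional equation as an identity in the free rank-one $\bC[x,f^{-1},s]$-module on the generator $F^{s+a}$ (recall $F^s=\big(\prod_jf_j^{a_j}\big)^{-1}F^{s+a}$), specializing $s\mapsto\lambda$ in the coefficients and multiplying back by $F^{\lambda+a}$ yields
$$b(\lambda)\,F^{\lambda}=P(\lambda)\,F^{\lambda+a}$$
as an identity of holomorphic functions on a small punctured polydisk $U\setminus C$ about $p$; here $F^{\lambda}$ and $F^{\lambda+a}$ are single-valued near $p$ because $c\in\bZ$.

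The proof then concludes by comparing orders of vanishing along $C=\{x_1=0\}$. In these coordinates $F^{\lambda+a}=\big(\prod_ju_j^{\lambda_j+a_j}\big)x_1^{\,m-c}$ with $m-c\ge 0$, so $F^{\lambda+a}$ extends holomorphically across $C$; hence $P(\lambda)F^{\lambda+a}$ — the operator $P(\lambda)$, with holomorphic coefficients, applied to a holomorphic function — is holomorphic near $p$. On the other hand $F^{\lambda}=\big(\prod_ju_j^{\lambda_j}\big)x_1^{-c}$, and if $b(\lambda)\ne 0$ this has a pole of order exactly $c\ge 1$ along $C$ near $p$, since $\prod_ju_j^{\lambda_j}$ is holomorphic and nonvanishing there. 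Two meromorphic functions on $U$ agreeing on the dense open $U\setminus C$ agree on $U$, so comparing pole orders along $C$ forces $b(\lambda)=0$. Thus $b$ vanishes on all of $H_c$, proving the proposition.

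I do not anticipate a serious obstacle, this being essentially the classical argument; the only points demanding care are the legitimacy of specializing the functional equation at $s=\lambda$ (standard for Bernstein-Sato functional equations) and the choice of $p$ general enough for the simultaneous monomializations $f_j=u_jx_1^{m_j}$ with all $u_j$ units. If one wants the sharp statement, $B_{F,p}^a$ can be pinned down exactly: conjugation by $\prod_ju_j^{s_j}$, which sends $\partial_i$ to $\partial_i-\sum_js_j(\partial_iu_j)/u_j\in\sD_{X,p}[s]$, reduces to $f_j=x_1^{m_j}$, and then reduction to one variable together with $\partial_{x_1}^{m}\big(x_1^{t+m}\big)=\big(\prod_{c=1}^m(t+c)\big)x_1^{t}$, $t=\sum_jm_js_j$ (and extraction of the $x_1^0$-coefficient of a general functional equation for the reverse inclusion), gives $B_{F,p}^a=\big\langle\prod_{c=1}^m\big(\sum_jm_js_j+c\big)\big\rangle$; this is more than is needed here.
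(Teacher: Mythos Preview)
Your proof is correct. Both you and the paper reduce to a smooth point $p$ of $C$ and write $f_j=u_jx_1^{m_j}$ with the $u_j$ units; the paper then simply asserts that $B_{F,p}^a$ is the principal ideal $\big\langle\prod_{c=1}^m(\sum_j m_js_j+c)\big\rangle$, displaying only a functional equation. Your primary argument supplies the direction actually needed for the proposition --- that every $b\in B_{F,p}^a$ vanishes on each $H_c$ --- by a somewhat different route: specialize the functional equation at $\lambda\in H_c$ and compare pole orders along $C$, which avoids computing the full local ideal and is closer in spirit to the integral argument the paper uses later for Theorem~\ref{thm: JumpingWall}. Your closing remark on conjugation by $\prod_j u_j^{s_j}$ then recovers the sharp computation of $B_{F,p}^a$, matching (indeed, carried out a bit more carefully than) what the paper states; note that the operator $\partial_1^{\,m}(\prod_j u_j^{a_j})^{-1}$ displayed there only gives the clean identity when the $u_j$ happen to be independent of $x_1$, whereas your conjugation produces the correct operator in general.
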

Further, we generalise the fact that the jumping numbers of $f$ in $(0,\lct(f)+1)$ are roots of $b_{f}(s)$, \cite{ClassicalJump}, \cite[Theorem 2]{BSArbitraryVariety}. Recall that the log-canonical threshold $\lct(f)$ is the smallest jumping number of $f$.

For any $\lambda\in \mathbb{R}_{\geq 0}^r$ the {\it mixed multiplier ideal sheaf} of $F^\lambda$ is given by
$$\mathcal{J}(F^\lambda) = \mu_*\O_Y(K_\mu - \lfloor \sum_{j=1}^r \lambda_j \mu^* D_j \rfloor)$$
where $D_i$ denotes the divisor determined by $f_i$ and $\lfloor \blank \rfloor$ is the round-down of an $\mathbb{R}$-divisor.
Associated to $\lambda$ is the subset, which together with the induced Euclidean topology we call {\it region}, $$\mathcal{R}_F(\lambda):= \{\lambda'\in \mathbb{R}_{\geq 0}^r:\mathcal{J}(F^{\lambda}) \subseteq \mathcal{J}(F^{\lambda'})\}.$$
We note that $\mathcal{R}_F(\lambda)\subseteq \mathcal{R}_F(\lambda')$ if $\lambda_i\le \lambda'_i$ for all $i$.
The {\it jumping walls of $F$} are given by the intersection of $\mathbb{R}^r_{>0}$ with the boundary of $\mathcal{R}_F(\lambda)$ for some $\lambda$.
In the case $r=1$, these are the jumping numbers of $f$.

By the definition of mixed multiplier ideals, each facet of a jumping wall, that is, a codimension-one face, is cut out by a hyperplane of the form
$\sum_{j=1}^r\ord_E(f_j)s_j = k_E + c$
with $c\in \mathbb{Z}_{>0}$ and $E$ an irreducible component of $\mu^*D$. Thus facets of jumping walls can potentially determine  irreducible components of $Z(B_F^a)$ by replacing $s_j$ with $-s_j$, by Theorem \ref{thm: MainTheorem}.

The log-canonical threshold, or rather the interval $[0,\lct (f)]$, is generalised by the {\it $\LCT$-polytope}
$$ \LCT(F) :=  \bigcap_{E}\{\lambda\in \mathbb{R}_{\geq 0}^r : \sum_{j=1}^r \ord_E(f_j)\lambda_j \leq k_E + 1\}.$$
The facets of $\LCT(F)$ intersecting $\bR_{>0}^r$ non-trivially are always jumping walls of $F$.

Define the {\it $\KLT_{a}$-region}
$$\KLT_{a}(F):= \bigcap_{E}\{\lam\in \mathbb{R}_{\geq 0}^r: \sum_{j=1}^r \ord_E(f_j)(\lambda_j - a_j) < k_E + 1\}.$$
If $r=1$ then $\LCT_a(f)=[0,\lct(f)+a)$.

We  rephrase $\LCT(F)$ and $\LCT_a(F)$ in terms of log-canonical and Kawamata log-terminal singularities in \ref{sec: JumpingWall}. This shows that  $\LCT(F)$ and $\LCT_a(F)$ are independent of the chosen resolution.

\begin{theorem}\label{thm: JumpingWall} If a facet of a jumping wall of $F$ intersects $\KLT_{a}(F)$, then the facet determines an irreducible component of $Z(B_F^a)$.
\end{theorem}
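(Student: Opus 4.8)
\emph{Proof plan.}
Write the given facet as the part in $\mathbb{R}^r_{\ge 0}$ of a hyperplane $H=\{\sum_j\ord_E(f_j)\lambda_j=k_E+c\}$ with $E$ a component of $\mu^*D$ and $c\in\mathbb{Z}_{>0}$, and let $L\subseteq\mathbb{C}^r$ be the hyperplane $\{\sum_j\ord_E(f_j)s_j+k_E+c=0\}$ obtained by the substitution $s_j\mapsto-\lambda_j$. Since $B_F^a\ne 0$ we have $Z(B_F^a)\ne\mathbb{C}^r$, so any codimension-one irreducible subvariety of $Z(B_F^a)$ is an irreducible component of it; thus it suffices to show $L\subseteq Z(B_F^a)$. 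As $Z(B_F^a)$ is Zariski closed and $H\cap\mathbb{R}^r$ is Zariski dense in $L$, and as $\KLT_a(F)$ is open and meets the facet, it is enough to prove $-\lambda_0\in Z(B_F^a)$ for all $\lambda_0$ in a dense subset $S$ of the facet lying in $\KLT_a(F)$ --- concretely, $\lambda_0$ in the relative interior of the facet, in $\KLT_a(F)$, off the (countably many) other walls of $F$ and off the loci where the relevant linear forms become integral.

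\emph{Reduction to a $\D_X$-module non-membership.}
Fix such a $\lambda_0$ and a $b(s)\in B_F^a$; by definition $b(s)F^s=P\cdot F^{s+a}$ for some $P\in\D_X[s]$. Apply the specialisation $\D_X[s]F^s\twoheadrightarrow\D_XF^{-\lambda_0}$, $P(s)F^s\mapsto P(-\lambda_0)F^{-\lambda_0}$, which is well defined because $\D_X[s]F^s$ sits inside the free rank-one $\bC[x,f^{-1},s]$-module on $F^s$, is $\D_X$-linear, and carries $F^{s+a}=f^aF^s$ to $f^aF^{-\lambda_0}$, where $f^a:=\prod_jf_j^{a_j}$. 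We obtain $b(-\lambda_0)\,F^{-\lambda_0}\in\D_X\cdot(f^aF^{-\lambda_0})$, so $b(-\lambda_0)=0$ once we know
$$(\ast)\qquad F^{-\lambda_0}\notin\D_X\cdot(f^aF^{-\lambda_0}).$$
As $b$ was arbitrary, $(\ast)$ gives $-\lambda_0\in Z(B_F^a)$; the germ case is identical, with analytic operators and $B_F^a=\bigcap_xB_{F,x}^a$. Hence the theorem is reduced to $(\ast)$.

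\emph{Proof of $(\ast)$.}
Consider the ideal $\mathfrak a=\{g\in\mathcal{O}_X: gF^{-\lambda_0}\in\D_X(f^aF^{-\lambda_0})\}$; we must show $1\notin\mathfrak a$. One translates the defining condition of $\mathfrak a$ into mixed-multiplier-ideal data, either through the several-variable $V$-filtration of $(i_F)_+\mathcal{O}_X$ along the graph of $F$ (extending the Budur--Musta\c{t}\u{a}--Saito correspondence from $r=1$), or by a direct computation on the log resolution $\mu$: there $\mu^*f_j$, $\mu^*(f^a)$ and the multivalued $\mu^*F^{-\lambda_0}$ become monomials with exponents $\ord_{E'}(f_j)$, $\sum_j\ord_{E'}(f_j)a_j$, $\sum_j\ord_{E'}(f_j)\lambda_{0,j}$ along a component $E'$ of $\mu^*D$, and membership becomes a comparison of round-downs of these against $k_{E'}$. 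The outcome of this translation is: if $F^{-\lambda_0}\in\D_X(f^aF^{-\lambda_0})$ and $\mathcal{O}_X\subseteq\mathcal{J}(F^{\lambda_0-a})$, then $\lambda\mapsto\mathcal{J}(F^\lambda)$ does not jump at $\lambda_0$ across $H$. But $\lambda_0\in\KLT_a(F)$ is precisely the condition $K_\mu-\lfloor\sum_j(\lambda_{0,j}-a_j)\mu^*D_j\rfloor\ge 0$, i.e.\ $\mathcal{O}_X\subseteq\mathcal{J}(F^{\lambda_0-a})$; and $\lambda_0$ lies in the relative interior of a facet of a jumping wall of $F$, so $\mathcal{J}(F^\bullet)$ does jump there. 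This contradiction proves $(\ast)$, hence the theorem.

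\emph{The main obstacle.}
The substantive step is the translation used in the last paragraph, i.e.\ the precise several-variable, shift-$a$ dictionary between membership of $F^{-\lambda_0}$ in the $\D_X$-submodule generated by $f^aF^{-\lambda_0}$ and the \emph{genuine} (as opposed to merely candidate) jumping of the mixed multiplier ideals $\mathcal{J}(F^\bullet)$ across $H$. For $r=1$ it is the known fact that jumping numbers of $f$ in $[0,\lct(f)+a)$ are roots of the $a$-shifted Bernstein--Sato polynomial; in general one must exploit the genericity of $\lambda_0$ encoded in $S$ to ensure that only the coefficient along $E$ of $\lfloor\sum_j\lambda_j\mu^*D_j\rfloor$ moves as $\lambda$ crosses $H$ near $\lambda_0$, and that the induced change of the direct image $\mu_*$ is nonzero. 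The remaining steps are routine.
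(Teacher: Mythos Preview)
Your reduction to the non-membership statement $(\ast)$ is fine, but the proof of $(\ast)$ is not a proof: you assert that a ``translation'' yields the implication
\[
F^{-\lambda_0}\in\D_X(f^aF^{-\lambda_0})\ \text{and}\ \mathcal{O}_X\subseteq\mathcal{J}(F^{\lambda_0-a})\ \Longrightarrow\ \mathcal{J}(F^\bullet)\text{ does not jump at }\lambda_0,
\]
and you yourself flag this as the ``main obstacle.'' Neither of the two routes you propose is carried out. The several-variable $V$-filtration route would require a precise dictionary between the $V$-filtration of $(i_F)_+\mathcal{O}_X$ and mixed multiplier ideals together with a compatibility with the shift by $a$; this is not in the literature in the form you need and is itself a substantial theorem. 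The ``direct computation on the resolution'' route is not available either: membership of $gF^{-\lambda_0}$ in $\D_X(f^aF^{-\lambda_0})$ is not a simple order-of-vanishing comparison along the $E'$, because applying a differential operator upstairs can lower orders along exceptional divisors in ways controlled by $k_{E'}$, and making this precise for arbitrary real $\lambda_0$ and arbitrary $a$ is exactly the content you are trying to prove.

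The paper avoids this difficulty entirely by using the analytic description of $\mathcal{J}(F^\lambda)_x$ and of $\KLT_a(F)$ in terms of local integrability of $|\phi|^2\prod_j|f_j|^{-2\lambda_j}$ and $\prod_j|f_j|^{-2(\lambda_j-a_j)}$. From the functional equation one gets $|b(s)|^2\prod_j|f_j|^{2s_j}=P\overline{P}\prod_j|f_j|^{2(s_j+a_j)}$; after multiplying by $|\phi|^2\psi$, integrating by parts, and invoking analytic continuation, one lets $s\to-\lambda$ along the facet. The $\KLT_a$ hypothesis makes the right-hand integral stay bounded (dominated convergence), while $\phi\notin\mathcal{J}(F^\lambda)_x$ forces the left-hand integral to diverge (monotone convergence), so $b(-\lambda)=0$. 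This is elementary once one has the analytic characterizations, and it bypasses any $\D$-module/$V$-filtration dictionary. If you want to salvage your algebraic strategy, the honest task is to \emph{prove} the implication displayed above; as written, the proposal is a correct reduction followed by an unproved claim at the decisive step.
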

This theorem was shown by \cite{cassou2011multivariable} for $Z(B_F^{\mathbf{1}})$ when $f_1,\ldots, f_r$ are germs of plane curves.
We employ the same method, which is essentially the one used in \cite{kollar1997singularities}, \cite[Theorem B]{ClassicalJump}.

From \Cref{thm: JumpingWall} we deduce a generalisation for the fact that the largest root of the Bernstein-Sato polynomial $b_f(s)$ is equal to $-\lct(f)$ when $r=1$.

\begin{corollary}\label{thm: LCT}
  Let $\sum_{j=1}^r \ord_E(f_j)s_j = k_E + 1$ define the affine span of a facet of $\LCT(F)$. Then $\sum_{j=1}^r \ord_{E}(f_j)s_j +k_E + 1=0$ defines an irreducible component of $Z(B_F^a)$ if there exists at least one $j$ with $a_j\neq 0$ and $\ord_{E}(f_j)\neq 0$.
\end{corollary}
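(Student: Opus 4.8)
The plan is to deduce the corollary directly from Theorem \ref{thm: JumpingWall}. Set $H:=\{\lambda\in\bR_{\ge 0}^r:\sum_{j=1}^r\ord_E(f_j)\lambda_j=k_E+1\}$; by hypothesis $H$ is the affine span of a facet $G$ of the polytope $\LCT(F)$, so $\dim G=r-1$ and $H$ is the affine hull of $G$. Since $k_E+1\ge 1>0$, the hyperplane $H$ does not pass through the origin and hence is not one of the coordinate hyperplanes $\{\lambda_j=0\}$; therefore $G$ is not contained in any $\{\lambda_j=0\}$, so its relative interior $\operatorname{relint}(G)$, a nonempty relatively open subset of $H$, lies in $\bR_{>0}^r$. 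In particular $G$ meets $\bR_{>0}^r$ nontrivially, so by the fact recalled above that facets of $\LCT(F)$ meeting $\bR_{>0}^r$ nontrivially are jumping walls of $F$, there is a jumping wall of $F$ having a facet with affine span $H$ that contains $\operatorname{relint}(G)$.

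The main step is to produce a point of this facet lying in $\KLT_a(F)$. I would take any $\lambda\in\operatorname{relint}(G)$. Then $\lambda\in\bR_{>0}^r$, $\sum_j\ord_E(f_j)\lambda_j=k_E+1$, and, because $\lambda$ lies in the relative interior of $G$, one has the strict inequality $\sum_j\ord_{E'}(f_j)\lambda_j<k_{E'}+1$ for every component $E'$ of $\mu^*D$ whose constraint hyperplane in $\LCT(F)$ is different from $H$. Membership $\lambda\in\KLT_a(F)$ means $\sum_j\ord_{E'}(f_j)\lambda_j<k_{E'}+1+\sum_j\ord_{E'}(f_j)a_j$ for all $E'$. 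When the constraint hyperplane of $E'$ differs from $H$ this is clear, since the left-hand side is $<k_{E'}+1$ and $\sum_j\ord_{E'}(f_j)a_j\ge 0$. When the constraint hyperplane of $E'$ equals $H$, the vector $(\ord_{E'}(f_1),\ldots,\ord_{E'}(f_r),k_{E'}+1)$ is a positive scalar multiple of $(\ord_E(f_1),\ldots,\ord_E(f_r),k_E+1)$, so $\sum_j\ord_{E'}(f_j)\lambda_j=k_{E'}+1$ while $\sum_j\ord_{E'}(f_j)a_j$ is a positive multiple of $\sum_j\ord_E(f_j)a_j$; the latter is strictly positive precisely because, by hypothesis, some $j$ satisfies $a_j\ne 0$ and $\ord_E(f_j)\ne 0$. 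This is the only use of the hypothesis on $a$, and it shows $\lambda\in\KLT_a(F)$, hence the facet meets $\KLT_a(F)$.

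It remains to apply Theorem \ref{thm: JumpingWall} to this facet: it produces an irreducible component of $Z(B_F^a)$, and under the substitution $s_j\mapsto -s_j$ described after Theorem \ref{thm: MainTheorem} the equation $\sum_j\ord_E(f_j)s_j=k_E+1$ of the facet corresponds to the hyperplane $\sum_{j=1}^r\ord_E(f_j)s_j+k_E+1=0$, which is therefore an irreducible component of $Z(B_F^a)$ of codimension one, as claimed.

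I do not expect a genuine obstacle, since the corollary is essentially a bookkeeping specialisation of Theorem \ref{thm: JumpingWall}; the points requiring care are only (i) checking that the given facet of $\LCT(F)$ genuinely meets $\bR_{>0}^r$, so that it is a facet of a jumping wall, and (ii) evaluating the defining inequalities of $\KLT_a(F)$ at a \emph{relative-interior} point of the facet rather than an arbitrary point, so that the strict inequalities hold for all divisors $E'$, including those whose constraint hyperplane happens to coincide with $H$.
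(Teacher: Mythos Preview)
Your proof is correct and follows essentially the same approach as the paper: pick a point $\lambda$ in the relative interior of the facet, verify the strict $\KLT_a$-inequalities for all divisors $E'$ by separating the case where $E'$ defines the same hyperplane $H$ (using the hypothesis on $a$) from the case where it does not, and then apply Theorem~\ref{thm: JumpingWall}. Your argument is in fact slightly more explicit than the paper's, since you justify why the facet meets $\bR_{>0}^r$ (the paper states this as ``by definition''), but the substance is the same.
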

This together with \Cref{thm: MainTheorem} implies the analogue of the maximality statement from the case $r=1=a$: the irreducible components of codimension one of $Z(B_F^a)$ originating from the $\LCT$-polytope are the closest to the origin with that slope. 

In the hyperplane arrangement case, by using the canonical log resolution for hyperplane arrangements, and picking the component E of the exceptional divisor corresponding to the origin so that $k_E + 1$ is the rank of the arrangement, Corollary \ref{thm: LCT}  recovers \cite[Theorem 5.6]{Lei}: 
$\{s_1 +\cdots+s_r +n=0\}\subseteq Z(B_F^{\mathbf{1}})$.


Saito \cite{RealLogCan} also introduced a version of log-canonical thresholds and jumping numbers for real algebraic functions, called {\it real log-canonical threshold} and {\it real jumping numbers}. ``Real" here refers to working over $\bR$. Real jumping numbers, like the usual jumping numbers defined when the base field is $\bC$, are positive rational numbers. It is shown in \cite{RealLogCan} that the negatives of small real jumping numbers are roots of  Bernstein-Sato polynomials.
Interesting about these real jumping numbers is that they do not have to agree with the usual jumping numbers.
These results are of further interest due to applications to statistics \cite{Wa}.

Mixed multiplier ideals and their jumping walls will be defined on real algebraic manifolds in \ref{sec: RealJump}.
There are also the associated notions of a {\it $\RKLT_a$-region}, {\it $\RLCT$-polytope} and real Bernstein-Sato ideal. In Theorem \ref{thmRs} and Corollary \ref{corRs} we give the real analogs of Theorem \ref{thm: JumpingWall} and Corollary \ref{thm: LCT}, generalising Saito's results.


For the proof of Theorem \ref{thm: MainTheorem} we follow the strategy of  Kashiwara \cite{kashiwara1976b} and Lichtin \cite{lichtin1989poles}.  The main problem for the case $r>1$ is that the $\sD_X$-modules computing the Bernstein-Sato ideals are not holonomic anymore, and thus a new idea is needed. This is essentially the problem which has been surmounted using relative holonomic $\sD$-modules first in \cite{maisonobe2016filtration}, and then in \cite{budur2020zeroI}, \cite{budur2020zeroII},  \cite{robin} in order to provide a topological interpretation of $Z(B_F^a)$. The results of these papers are thus crucial for us.  Relative holonomic $\sD$-modules appeared as early as \cite{sabbah1987proximiteII} and are also recently studied in  \cite{AnalyticDirectIm, FS}.  Our main technical result is Lemma \ref{gradeBound}. The proofs of the other results mentioned above are straight-forward and need no essential new ideas.

In Section \ref{sec: DXS} we gather the main results on relative $\sD$-modules we need. In Section \ref{secUB} we use these results to prove Theorem \ref{thm: MainTheorem}. In Section \ref{secLB} we prove the other results. In Section \ref{secEx} we give an example.

\medskip
\noindent
{\bf Acknowledgement.} We were informed that L. Wu  has also proven all the results in this paper. We thank D. Bath, L. Wu, and P. Zhao for useful discussions, and the referee for many good comments. N. Budur was supported by the grants  FWO G097819N, FWO G0B3123N,  Methusalem METH/15/026. R. van der Veer was supported by an FWO PhD fellowship.

\section{$\D_X[s]$-modules}\label{sec: DXS}

This section provides preliminaries on the theory of $\D_X[s]$-modules such as direct images and homological properties, where $s=(s_1,\ldots,s_r)$.

\subsection{Relative holonomic $\sD$-modules}\label{sec: RelHol}
Let $X$ be a smooth complex variety and let $R$ be a regular commutative finitely generated $\C$-algebra integral domain. The {\it sheaf of relative differential operators} on $X$ is defined by
$$\D_X^R := \D_X \otimes_\C R.$$
The order filtration $F_j \D_X$ on $\D_X$ extends to a filtration $F_j \D_X^R := F_j\D_X\otimes_\C R$ on $\D_X^R$.
The graded objects for this filtration are denoted by $\grrel$.
Denote $\pi_{T^*X}:T^*X \to X$ and $\pi_{\Spec R}: \Spec R \to \{\operatorname{pt}\}$ the projection maps onto $X$ and a point, respectively.
Since $\gr\D_X \cong (\pi_{T^*X})_* \O_{T^*X}$ \cite[Chapter 2]{hotta2007d} it holds that $\grrel\D_X^R \cong (\pi_{T^* X}\times \pi_{\Spec R})_*\O_{T^*X\times \Spec R}$.

Since $\D_X^R$ is a sheaf of non-commutative rings, one should distinguish between left and right $\D_X^R$-modules.
We may also refer to a $\D_X^R$-module without specifying left or right if no confusion is possible. In these cases it is intended that the result holds in either case.

For any filtered $\D_X^R$-module $\M$ there is an associated sheaf of modules on $T^*X\times \Spec R$ given by
$(\pi_{T^*X}\times \pi_{\Spec R})^{-1}(\grrel\M)\allowbreak\otimes_{\pi^{-1}\grrel\D_X^R} \O_{T^*X\times \Spec R}$.
From now   we write $\grrel\D_X^R$ and $\grrel \M$ for the corresponding sheaves on $T^*X\times \Spec R$.

A filtration  compatible with $F_\bullet \sD_X^R$ on a $\D_X^R$-module $\M$ is said to be {\it good} if $\grrel\M$ is a coherent $\grrel\D_X^R$-module.
A quasi-coherent $\D_X^R$-module $\M$ locally admits a good filtration if and only if it is coherent \cite[Corollary D.1.2]{hotta2007d}, in fact one can take this filtration to be global \cite[Proof of Theorem 2.1.3]{hotta2007d}.
For a coherent $\D_X^R$-module $\M$ the support $\Chrel\M$ of $\grrel\M$ in $T^*X\times \Spec R$ is independent of the chosen filtration \cite[Lemma D.3.1]{hotta2007d} and is called the {\it relative characteristic variety}.
Equivalently, the relative characteristic variety is locally determined by the radical of the annihilator ideal of $\grrel \M$ in $\grrel\D_X^R$.

\begin{lemma}[{\cite[Lemma 3.2.2]{budur2020zeroI}}]\label{prop: SESBehaviourChrel}
  For any short exact sequence of coherent $\D_X^R$-modules
  $$0\to \M_1\to \M_2 \to \M_3 \to 0 $$
  it holds that $\Chrel\M_2 = \Chrel\M_1 \cup \Chrel\M_3.$
\end{lemma}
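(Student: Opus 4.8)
The plan is to reduce the statement to a purely commutative-algebra fact about supports of graded modules, after choosing good filtrations compatibly. First I would pick a global good filtration $F_\bullet\M_2$ on $\M_2$ (possible by the coherence remarks cited from \cite{hotta2007d}). This induces filtrations on $\M_1$ and $\M_3$: on $\M_1$ by the subspace filtration $F_j\M_1 := \M_1 \cap F_j\M_2$, and on $\M_3$ by the quotient filtration $F_j\M_3 := \operatorname{img}(F_j\M_2 \to \M_3)$. The standard argument (as in \cite[Theorem 2.1.3]{hotta2007d} for the absolute case) shows that both induced filtrations are again good, and — this is the key point — that the associated graded gives a short exact sequence of coherent $\grrel\D_X^R$-modules
$$ 0 \to \grrel\M_1 \to \grrel\M_2 \to \grrel\M_3 \to 0. $$
The verification of exactness here is the routine diagram-chase with the filtration pieces; goodness of the induced filtrations is where one uses noetherianity of $\grrel\D_X^R$ (which holds since $\grrel\D_X^R \cong (\pi_{T^*X}\times\pi_{\Spec R})_*\O_{T^*X\times\Spec R}$ is a sheaf of noetherian rings, $R$ being finitely generated over $\C$).

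Next I would pass to the associated sheaves on $T^*X\times\Spec R$: tensoring a short exact sequence of graded $\grrel\D_X^R$-modules with $\O_{T^*X\times\Spec R}$ over $\pi^{-1}\grrel\D_X^R$ need not remain exact in general, but it remains right exact, and more importantly supports behave well. Indeed, for a coherent sheaf the support is the vanishing locus of its annihilator (equivalently its Fitting ideal), and for any short or right-exact sequence $\mathcal{F}_1 \to \mathcal{F}_2 \to \mathcal{F}_3 \to 0$ one has $\operatorname{supp}\mathcal{F}_2 = \operatorname{supp}(\operatorname{img}(\mathcal{F}_1\to\mathcal{F}_2)) \cup \operatorname{supp}\mathcal{F}_3$, and $\operatorname{supp}(\operatorname{img}(\mathcal{F}_1\to\mathcal{F}_2)) \subseteq \operatorname{supp}\mathcal{F}_1$. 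Combined with $\operatorname{supp}\mathcal{F}_1 \subseteq \operatorname{supp}\mathcal{F}_2$ (which holds if the first map is injective, or more generally one argues via the graded level before tensoring, where exactness on the left \emph{does} hold), this yields $\operatorname{supp}\mathcal{F}_2 = \operatorname{supp}\mathcal{F}_1 \cup \operatorname{supp}\mathcal{F}_3$. Applying this with $\mathcal{F}_i = \grrel\M_i$ gives exactly $\Chrel\M_2 = \Chrel\M_1 \cup \Chrel\M_3$, using that $\Chrel$ is by definition the support of $\grrel\M_i$ and is independent of the chosen good filtration by \cite[Lemma D.3.1]{hotta2007d}.

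The main obstacle is the bookkeeping at the graded level: one must be careful that the subspace/quotient filtrations are good (not merely compatible) so that the supports one computes are genuinely the relative characteristic varieties, and one must handle the tensoring-down step correctly since it is only right exact — the cleanest route is to establish the inclusion $\Chrel\M_1 \subseteq \Chrel\M_2$ already at the level of the exact sequence of graded modules over $\grrel\D_X^R$ (where the sequence is honestly exact, so annihilator ideals only grow under quotients and a submodule has larger support), and only then tensor up to $T^*X\times\Spec R$, where the remaining two inclusions $\Chrel\M_2 \subseteq \Chrel\M_1\cup\Chrel\M_3$ and $\Chrel\M_3\subseteq\Chrel\M_2$ follow from right-exactness alone. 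Once this is set up the rest is formal. (Since this is \cite[Lemma 3.2.2]{budur2020zeroI}, we may alternatively simply cite it, but the above sketch records the argument for completeness.)
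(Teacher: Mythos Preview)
The paper does not actually prove this lemma; it is stated with a citation to \cite[Lemma 3.2.2]{budur2020zeroI} and used as a black box. Your sketch is the standard argument (identical in spirit to the absolute case, e.g.\ \cite[Proposition 2.2.3]{hotta2007d}) and is correct.

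One minor remark: your worry about the tensoring step being only right exact is unnecessary. The functor taking a graded $\grrel\D_X^R$-module to the associated sheaf on $T^*X\times\Spec R$ is exact, because $\pi_{T^*X}\times\pi_{\Spec R}$ is affine and the construction is precisely the equivalence between quasi-coherent $(\pi_{T^*X}\times\pi_{\Spec R})_*\O_{T^*X\times\Spec R}$-modules on $X$ and quasi-coherent $\O_{T^*X\times\Spec R}$-modules (it is just localization stalkwise). So the short exact sequence of graded modules passes directly to a short exact sequence of coherent sheaves on $T^*X\times\Spec R$, and then $\supp\mathcal{F}_2=\supp\mathcal{F}_1\cup\supp\mathcal{F}_3$ is immediate. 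Your workaround via annihilator ideals at the graded level is fine too, just more laborious than needed.
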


\begin{definition}\label{def: RelHolonomic}
  A coherent $\D_X^R$-module $\M$ is said to be {\it relative holonomic} if its relative characteristic variety is a finite union $\Chrel\M = \cup_w \Lambda_w \times S_w$ where $\Lambda_w\subseteq T^* X$ are irreducible conic Lagrangian subvarieties and $S_w\subseteq \Spec R$ are irreducible subvarieties.
\end{definition}

\begin{lemma}[{\cite[Lemma 3.2.4]{budur2020zeroI}}]\label{prop: SubquotientRelHol}
  Any subquotient of a relative holonomic module is relative holonomic.
\end{lemma}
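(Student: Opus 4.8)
\emph{Proof plan.} First I would reduce to the cases of a submodule and of a quotient module. For a subquotient $\N_2/\N_1$ with $\N_1\subseteq\N_2\subseteq\M$, two applications of Lemma~\ref{prop: SESBehaviourChrel} — to $0\to\N_1\to\N_2\to\N_2/\N_1\to 0$ and to $0\to\N_2\to\M\to\M/\N_2\to 0$ — give $\Chrel(\N_2/\N_1)\subseteq\Chrel\N_2\subseteq\Chrel\M$. This uses that $\N_1,\N_2$ and $\N_2/\N_1$ are coherent $\D_X^R$-modules, which holds because $\grrel\D_X^R\cong\O_{T^*X}\otimes_\C R$ is locally Noetherian, so $\D_X^R$ is a locally Noetherian sheaf of rings and submodules of coherent $\D_X^R$-modules are coherent (quotients trivially are).

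It then remains to show: if $\N$ is a coherent $\D_X^R$-module with $\Chrel\N\subseteq\Chrel\M=\bigcup_w\Lambda_w\times S_w$, then $\N$ is relative holonomic. Locally $\Chrel\N$ has finitely many irreducible components and is conic in the $T^*X$-direction (being the support of a graded $\grrel\D_X^R$-module), so each component $Z$ is an irreducible conic subvariety contained in a single $\Lambda_w\times S_w$; the point is to upgrade this to $Z=\Lambda_w\times S'$ with $S'\subseteq S_w$ irreducible.

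Here I would invoke the relative form of Gabber's involutivity theorem: the relative characteristic variety of a coherent $\D_X^R$-module is coisotropic for the degenerate Poisson structure on $T^*X\times\Spec R$ coming from the canonical symplectic form on $T^*X$ (equivalently, the radical of the annihilator of $\grrel\N$ in $\grrel\D_X^R$ is Poisson-closed). A short linear-algebra computation shows that coisotropy forces $\dim\bigl(T_zZ\cap T_zT^*X\bigr)\ge\dim X=\dim\Lambda_w$ at a generic smooth point $z\in Z$; hence the generic fibre of $q\colon Z\to\Spec R$ has dimension $\ge\dim\Lambda_w$, while it is also a closed subvariety of the irreducible variety $\Lambda_w\times\{q(z)\}\cong\Lambda_w$ of that same dimension, so it equals $\Lambda_w\times\{q(z)\}$. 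Therefore $\Lambda_w\times S''\subseteq Z$ for a dense open $S''\subseteq S':=\overline{q(Z)}$, and passing to closures gives $\Lambda_w\times S'\subseteq Z\subseteq\Lambda_w\times S'$, i.e.\ $Z=\Lambda_w\times S'$ with $\Lambda_w$ conic Lagrangian and $S'$ irreducible. Thus $\Chrel\N$ is a finite union of such products and $\N$ is relative holonomic.

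I expect the only genuine obstacle to be the use of the relative Gabber involutivity theorem; everything else is formal or the elementary symplectic linear algebra indicated above. If one prefers to avoid it, an alternative is to specialise the inclusion $\N\hookrightarrow\M$ at the generic points of the strata $S_w$ and invoke that sub- and quotient modules of holonomic $\D$-modules over a field are holonomic, but controlling the behaviour of $\grrel$ under that base change is more delicate, so the Gabber route looks cleaner.
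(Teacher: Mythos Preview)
The paper itself does not prove this lemma; it only cites \cite[Lemma~3.2.4]{budur2020zeroI}. Your argument is correct and follows the same route as that reference: after the formal reduction via Lemma~\ref{prop: SESBehaviourChrel} to an irreducible component $Z\subseteq\Lambda_w\times S_w$ of $\Chrel\N$, one applies Gabber's involutivity theorem to the filtered ring $\D_X^R$ (whose associated graded carries the degenerate Poisson structure you describe), and the linear-algebra step---that involutivity forces $T_zZ\cap T_zT^*X$ to be coisotropic in the symplectic space $T_zT^*X$, hence of dimension at least $n=\dim\Lambda_w$---is exactly what pins the generic fiber of $Z\to\Spec R$ to all of $\Lambda_w$.
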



The functor which associates to a left $\D_X^R$-module $\M$ the right $\D_X^R$-module $\M\otimes_{\O_X}\omega_X$ is an equivalence of categories, where $\omega_X$ is the canonical invertible sheaf.
The pseudoinverse associates $\Hom_{\O_X}(\omega_X,\allowbreak\M)$ to a given right-module $\M$.

Pick local coordinates $x_1,\ldots, x_n$ on $X$, that is, regular functions such that  $dx_1,\ldots,dx_n$ are a local basis for $\Omega_{X}^1$.
There is an induced local section $dx := dx_1\wedge \ldots \wedge dx_n$ for $\omega_X$. For any left $\D_X^R$-module $\M$ one  has a locally defined $\O_X\otimes R$-linear isomorphism $\M \to \M\otimes_{\O_X}\omega_X$  associating to any section $m$ the section $m^* = m dx$.
This can be made to commute with the $\D_X^R$-module structure.
That is, for any operator $P$ of $\D_X^R$ there is an {\it adjoint operator} $P^*$ such that
$$(P\cdot m)^* = m^* \cdot P^*.$$
Indeed, for a vector field $\xi = \sum \xi_i \partial_i$ this is satisfied by setting $\xi^* = -\sum \partial_i \xi_i$.
Iteration then extends to differential operators of arbitrary order, and $(PQ)^*=Q^*P^*$ for $P,Q\in \D_X^R$.

\subsection{Direct image}
Let $\mu:Y\to X$ be a morphism of varieties.
The {\it direct image functor} on right $\D_Y$-modules is defined by $$\mu_+ \M:= R\mu_*(\M\otimes_{\D_Y}^L \D_{Y\to X}) $$
where $\D_{Y\to X}:= \O_Y\otimes_{\mu^{-1}\O_X}\mu^{-1}\D_X$ is the transfer $(\D_{Y},\mu^{-1}\D_X)$-bimodule. 
There is an induced $\D_Y^R$-module direct image functor.
Indeed, consider a right $\D_Y^R$-module $\M$ and observe that multiplication by $r\in R$ is $\D_Y$-linear.
By the functoriality of the $\D_Y$-module direct image it follows that there is an associated endomorphism on $\mu_+ \M$.
This equips the direct image with a canonical structure of a complex of $\D_X^R$-modules.
For  $j\in \mathbb{Z}$, the cohomology sheaf $ H^j\mu_+\M$ is called the {\it $j$-th direct image}.

Whenever $\mu$ is proper and $\M$ is coherent as $\D_Y^R$-module it holds that $H^j\mu_+\M$ is coherent over $\D_X^R$ for any $j$.
The proof for this statement is identical to the absolute case \cite[Theorem 2.5.1]{hotta2007d}. The following proposition may be established identically to the absolute case \cite[Theorem 4.4.1]{sabbah2011introduction}.

\begin{proposition}\label{prop: DirectImageRelHol}
  Suppose that $\mu$ is proper and let $\M$ be a relative holonomic right $\D_Y^R$-module.
  Then $H^j\mu_+ \M$ is relative holonomic for any $j\in \mathbb{Z}$.
\end{proposition}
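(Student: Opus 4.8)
The plan is to imitate the proof in the absolute case (\cite[Theorem 4.4.1]{sabbah2011introduction}), reducing the statement to the coherence statement already recorded together with a Lagrangian-type estimate for the relative characteristic variety of the direct image. First I would reduce to the case of a closed immersion composed with a projection: since $\mu$ is proper, by Chow's lemma and the usual factorisation one may write $\mu$ as the composition $Y\hookrightarrow X\times\mathbb{P}^N\to X$ of a closed immersion followed by a smooth projective projection, and both the direct image functor and the class of relative holonomic modules are stable under composition (the latter by \Cref{prop: SESBehaviourChrel} and \Cref{prop: SubquotientRelHol} applied to the cohomology of the composite complex, plus the spectral sequence for the composite direct image). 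So it suffices to treat (i) a closed immersion and (ii) a smooth projective morphism.

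For a closed immersion $i\colon Y\hookrightarrow X$ the direct image is concentrated in degree $0$ (Kashiwara's equivalence is exact), $\D_{Y\to X}$ is locally free over $\D_Y$, and one checks directly on local coordinates that $\Chrel i_+\M = (d i)^{-1}\bigl(\Chrel\M \times_{T^*Y}\ldots\bigr)$ pulled into $T^*X\times\Spec R$ in the expected way; concretely the relative characteristic variety of $i_+\M$ is the image of $\Chrel\M$ under the canonical correspondence $T^*Y\times\Spec R \leftarrow (Y\times_X T^*X)\times\Spec R \to T^*X\times\Spec R$, which sends a product $\Lambda_w\times S_w$ with $\Lambda_w$ conic Lagrangian in $T^*Y$ to a product of a conic Lagrangian in $T^*X$ with $S_w$; hence $i_+\M$ is again relative holonomic. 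For a smooth projective morphism $p\colon X\times\mathbb{P}^N\to X$ the argument is the standard one: choose a good filtration on $\M$ such that $\grrel\M$ is supported on a finite union $\Lambda_w\times S_w$; the associated graded of the relative de Rham / Spencer complex computing $p_+$ then has cohomology whose support is controlled by $\operatorname{pr}_X\bigl((T^*X\times_X\Lambda_w)\cap(\text{zero section of }T^*\mathbb{P}^N)\bigr)\times S_w$ after using properness of $p$ to push forward, and an involutivity/isotropy argument (the relative characteristic variety is always coisotropic, the argument being identical to \cite[Chapter 2]{hotta2007d} over the base $R$) forces each irreducible component of $\Chrel H^jp_+\M$ to be a product $\Lambda'\times S'$ with $\Lambda'$ conic Lagrangian in $T^*X$ and $S'$ an irreducible subvariety of $\Spec R$. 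The finiteness of the number of components is automatic from coherence of $H^jp_+\M$.

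I expect the main obstacle to be the bookkeeping of the relative characteristic variety through the two pushforwards: one must check that the ``$\Lambda_w$ conic Lagrangian in $T^*X$, $S_w$ irreducible in $\Spec R$'' product structure is preserved, which in the absolute case is the statement that holonomicity is preserved by proper pushforward, but here requires the additional observation that the $\Spec R$-factors are carried along unchanged (they are never touched by the cotangent-space operations, only cut out further by coherence). The cleanest way to organise this is to note that each step of the reduction only ever intersects the $T^*X$-factor with Lagrangians coming from the geometry of $\mu$ and takes images/pushforwards in the $T^*X$-direction, while in the $\Spec R$-direction the support can only shrink to a subvariety; since an intersection and image of conic Lagrangians (inside the zero-section-type loci produced by properness) is again a finite union of conic Lagrangians by the classical microlocal argument, and a subvariety of an irreducible $S_w$ is a finite union of irreducible subvarieties, the product form in \Cref{def: RelHolonomic} is recovered. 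All remaining verifications — exactness for closed immersions, the shape of the Spencer complex, coisotropy of $\Chrel$ over $R$ — are word-for-word the absolute arguments of \cite{hotta2007d, sabbah2011introduction} with $\D_X$ replaced by $\D_X^R$ and are omitted.
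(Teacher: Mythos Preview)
Your sketch is correct and follows precisely the route the paper indicates: the paper gives no proof beyond the remark that the argument is identical to the absolute case \cite[Theorem 4.4.1]{sabbah2011introduction}, and you have supplied that sketch together with the key observation that the $\Spec R$-factor is carried inertly through the cotangent correspondences while the $T^*X$-factor is handled by the classical isotropy/involutivity argument. In this sense you have written out more than the paper does.
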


\subsection{Homological notions}\label{sec: HomNotion}
Let $n = \dim X$ and $r = \dim R$.
For some results in this section the distinction between left and right modules is relevant.
Such results have been stated in terms of right $\D_X^R$-modules, which is the case we will need. It should be clear that these results have obvious analogues for left $\D_X^R$-modules.

\begin{definition}
  Let $\M$ be a non-zero coherent $\D_X^R$-module. The smallest integer $j\geq 0$ such that $\Ex{\D_X^R}{j}(\M,\D_X^R) \neq 0$ is called the {\it grade} of $\M$ and is denoted $j(\M)$. If $\M = 0$ then $j(\M)$ is said to be infinite.
\end{definition}

\begin{definition}\label{def: BSIdeal}
  The {\it Bernstein-Sato-ideal} of a $\D_X^R$-module $\M$ is given by $B_\M:=\Ann_R \M$. We denote by $Z(B_\M)$ the zero locus of $B_\M$, that is, the reduced closed subscheme defined by the radical ideal of $B_\M$ in $\Spec R$.
\end{definition}

\begin{lemma}[{\cite[Lemma 3.4.1]{budur2020zeroI}}]\label{prop: ChrelGrades}
  Let $\M$ be a relative holonomic $\D_X^R$-module. Then
  $$\dim \Chrel \M + j(\M) = 2n + r.$$
\end{lemma}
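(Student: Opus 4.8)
The plan is to reduce the statement to the analogous well-known fact for coherent modules over the commutative graded ring $\grrel\D_X^R$, or rather over $\O_{T^*X\times\Spec R}$, using a good filtration. First I would fix a global good filtration $F_\bullet\M$, which exists by the discussion after Definition \ref{def: RelHolonomic} (coherence implies the existence of a global good filtration), so that $\grrel\M$ is a coherent module over the sheaf $\grrel\D_X^R\cong(\pi_{T^*X}\times\pi_{\Spec R})_*\O_{T^*X\times\Spec R}$, with support exactly $\Chrel\M$ by definition of the relative characteristic variety. The key algebraic input is that $\D_X^R$ is an Auslander--Gorenstein ring (locally, a suitable filtered deformation of the regular commutative ring $\O_{T^*X\times\Spec R}$, which has pure dimension $2n+r$ and is Cohen--Macaulay), so there is a standard comparison: $j(\M)=j_{\grrel\D_X^R}(\grrel\M)$, i.e. the $\D_X^R$-grade of $\M$ equals the grade of the associated graded module over the commutative graded ring. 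This is the relative version of the classical statement for $\D_X$-modules (see e.g. \cite[Appendix D]{hotta2007d}); the proof via the spectral sequence $\Ext^p_{\grrel\D_X^R}(\grrel\M,\grrel\D_X^R)\Rightarrow\gr\Ext^{p}_{\D_X^R}(\M,\D_X^R)$ goes through verbatim, since filteredness and coherence are the only ingredients.

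Next I would translate the commutative grade into dimension. Over the regular ring $\O_{T^*X\times\Spec R}$, which is Cohen--Macaulay of pure dimension $2n+r$, for a nonzero coherent module $\mathcal N$ one has the Auslander--Buchsbaum/Bass-type identity $j(\mathcal N)=\operatorname{codim}(\operatorname{supp}\mathcal N)=(2n+r)-\dim\operatorname{supp}\mathcal N$. Applying this with $\mathcal N=\grrel\M$ and $\operatorname{supp}\grrel\M=\Chrel\M$ gives $j(\M)=(2n+r)-\dim\Chrel\M$, which is exactly the claimed equality $\dim\Chrel\M+j(\M)=2n+r$. One subtlety: the sheaf-theoretic grade is computed locally, so I would note that both $j(\M)$ and $\dim\Chrel\M$ may be checked on an affine cover, and that since $\M$ is relative holonomic, $\Chrel\M=\cup_w\Lambda_w\times S_w$ is a finite union, so $\dim\Chrel\M$ is globally well-defined and equals the maximum of the local dimensions; the relevant codimension identity is then applied on the (open) locus where the dimension is attained.

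The main obstacle, and the only genuinely nontrivial point, is establishing the ring-theoretic comparison $j(\M)=j_{\grrel\D_X^R}(\grrel\M)$ together with the purity/Cohen--Macaulay property needed for the codimension formula, uniformly in the relative setting: one must check that adjoining the polynomial-type variables from $R$ (equivalently, the smooth base $\Spec R$) does not disturb the Auslander--Gorenstein condition, and that a good filtration on $\M$ as a $\D_X^R$-module still yields the Rees-module machinery. Since $R$ is a regular finitely generated $\C$-algebra domain, $\D_X^R=\D_X\otimes_\C R$ is flat over $\D_X$ and its associated graded is $\O_{T^*X}\otimes_\C R$, which is again regular; so all the homological algebra is a routine extension of the absolute case. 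I would therefore present this step by citing the absolute statements in \cite[Appendix D]{hotta2007d} and remarking that the arguments carry over with $\D_X$ replaced by $\D_X^R$ and $T^*X$ by $T^*X\times\Spec R$, which is precisely how the cited \cite[Lemma 3.4.1]{budur2020zeroI} proceeds.
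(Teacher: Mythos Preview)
Your proposal is correct and follows the standard route via Auslander--regular ring theory: pass to the associated graded, identify the grade with the codimension of the support in the regular ring $\O_{T^*X\times\Spec R}$, and check that the filtered-to-graded comparison survives tensoring with the regular $\C$-algebra $R$. Note that the present paper does not actually prove this lemma but simply quotes it from \cite[Lemma 3.4.1]{budur2020zeroI}; your sketch is essentially the argument given there (which in turn rests on the homological machinery of \cite{B} and \cite[Appendix D]{hotta2007d}).
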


\begin{lemma}[{\cite[Lemma 3.2.2]{budur2020zeroI}}]\label{rem: GradeIFFBernsteinIdeal} Let $\M$ be a relative holonomic $\D_X^R$-module. Then $Z(B_\M)$ is the projection of $\Chrel\M$ on $\Spec R$.
 Hence, $j(\M)=n+k$  if and only if $Z(B_\M)$  has codimension $k$ in $\Spec R$.
  \end{lemma}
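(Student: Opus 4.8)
The plan is to reduce both statements to the relative characteristic variety decomposition $\Chrel\M = \bigcup_w \Lambda_w\times S_w$ provided by relative holonomicity, and then to combine this with the dimension formula of Lemma \ref{prop: ChrelGrades}. First I would prove the claim that $Z(B_\M)$ equals the image of $\Chrel\M$ under the projection $p\colon T^*X\times \Spec R\to \Spec R$. Choose a good filtration on $\M$; then $\grrel\M$ is a coherent $\grrel\D_X^R$-module whose support in $T^*X\times\Spec R$ is $\Chrel\M$, and $\grrel\D_X^R\cong (\pi_{T^*X}\times\pi_{\Spec R})_*\O_{T^*X\times\Spec R}$. The key observation is that $R\subseteq \grrel\D_X^R$ sits as the degree-zero part corresponding to $\Spec R$, so the action of $R$ on $\M$ is compatible with the action of $R$ on $\grrel\M$ coming from $\O_{T^*X\times\Spec R}$. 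An element $b\in R$ annihilates $\M$ if and only if, working locally and using that the associated graded of $b^N\M$ for suitable $N$ is controlled by $b\cdot\grrel\M$, the section $b$ vanishes on $\supp\grrel\M=\Chrel\M$, i.e. $b$ lies in the ideal of $p(\Chrel\M)$ up to radical. One direction is immediate: if $b$ vanishes on $\Chrel\M$ then some power of $b$ kills $\grrel\M$, hence $b$ is nilpotent modulo $\Ann_R\M$ after passing through the filtration, so $b\in\sqrt{B_\M}$; conversely if $b\in B_\M$ then $b$ kills $\grrel\M$ and so vanishes on $\Chrel\M$. This yields $Z(B_\M)=\overline{p(\Chrel\M)}$, and since $\Chrel\M=\bigcup_w\Lambda_w\times S_w$ is a finite union of irreducible closed sets with $S_w$ closed, $p(\Chrel\M)=\bigcup_w S_w$ is already closed.

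For the second statement, suppose $j(\M)=n+k$. By Lemma \ref{prop: ChrelGrades} this is equivalent to $\dim\Chrel\M = 2n+r-(n+k)=n+r-k$. Using the decomposition $\Chrel\M=\bigcup_w\Lambda_w\times S_w$ and the fact that each $\Lambda_w$ is a conic Lagrangian subvariety of $T^*X$, hence has dimension exactly $n$, we get $\dim(\Lambda_w\times S_w)=n+\dim S_w$, so $\dim\Chrel\M = n + \max_w\dim S_w$. Combining, $\max_w\dim S_w = r-k$, and since $Z(B_\M)=\bigcup_w S_w$, this says $\dim Z(B_\M)=r-k$, i.e. $Z(B_\M)$ has codimension $k$ in $\Spec R=\C^r$. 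Running the argument backwards gives the converse: if $Z(B_\M)$ has codimension $k$, then $\max_w\dim S_w=r-k$, whence $\dim\Chrel\M=n+r-k$ and $j(\M)=n+k$ by Lemma \ref{prop: ChrelGrades}.

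The main obstacle is the first step: making precise that $\Ann_R\M$ has the same radical as the ideal of $p(\Chrel\M)$. The subtlety is that $\M$ need not be $R$-coherent, only $\D_X^R$-coherent, so one cannot directly talk about the support of $\M$ over $\Spec R$; the argument must go through a good filtration and exploit that multiplication by $b\in R$ is a morphism of filtered $\D_X^R$-modules of degree zero, so that $\gr(b\cdot{-})$ is multiplication by the image of $b$ in $\grrel\D_X^R\cong\O_{T^*X\times\Spec R}$. Everything else — the dimension bookkeeping with $\dim\Lambda_w=n$ and the finiteness of the union — is routine once this identification is in hand. Since the lemma is quoted from \cite{budur2020zeroI}, I would in practice simply cite it, but the sketch above is how one reconstructs the proof.
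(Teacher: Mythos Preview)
The paper does not give its own proof of this lemma; it is simply quoted from \cite[Lemma~3.2.2]{budur2020zeroI} and used as a black box. Your reconstruction is the natural one and is correct. The projection identity $Z(B_\M)=p(\Chrel\M)$ follows because $R$ sits in filtration degree zero, so that annihilation by $b\in R$ passes between $\M$ and $\grrel\M$ up to radical; the one genuine subtlety, which you correctly isolate, is upgrading ``some power of $b$ kills each section'' to ``a single power of $b$ kills $\M$ locally,'' and this is exactly where $\D_X^R$-coherence of $\M$ together with local boundedness-below of a good filtration is used. The codimension statement is then the straightforward dimension count you give, combining Lemma~\ref{prop: ChrelGrades} with $\dim\Lambda_w=n$ for conic Lagrangians and $p(\bigcup_w\Lambda_w\times S_w)=\bigcup_w S_w$.
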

  
\begin{definition}
  A non-zero coherent $\D_X^R$-module $\M$ is said to be {\it $j$-pure} if $j(\N) = j(\M) = j$ for every non-zero submodule $\N$.
\end{definition}

 \begin{lemma}[{\cite[Lemma 3.4.2]{budur2020zeroI}}]\label{prop: Injective3.4.2}
   Let $\M$ be a $j$-pure relative holonomic $\D_X^R$-module and suppose that $b\in R$ is not contained in any minimal prime ideal of $R$ containing $B_\M$. Then there exists a good filtration on $\M$ such that multiplication by $b$ induces injective endomorphisms on $\M$ and $\grrel\M$.
 \end{lemma}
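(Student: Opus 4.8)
The plan is to treat the two assertions separately. Injectivity of multiplication by $b$ on $\M$ does not depend on the filtration, and this is where $j$-purity enters. Since $R$ lies in the centre of $\D_X^R=\D_X\otimes_\C R$, multiplication by $b$ is $\D_X^R$-linear, so $\N:=\ker(b\colon\M\to\M)$ is a coherent $\D_X^R$-submodule of $\M$, hence relative holonomic by Lemma \ref{prop: SubquotientRelHol}. Suppose $\N\ne 0$. By $j$-purity, $j(\N)=j(\M)$, so by Lemma \ref{rem: GradeIFFBernsteinIdeal} the closed sets $Z(B_\N)$ and $Z(B_\M)$ have the same codimension in $\Spec R$. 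On the other hand $\N\subseteq\M$ forces $B_\M=\Ann_R\M\subseteq\Ann_R\N=B_\N$, hence $Z(B_\N)\subseteq Z(B_\M)$. A closed subset of $Z(B_\M)$ of equal codimension must contain an irreducible component of $Z(B_\M)$ of maximal dimension, that is, a set $V(\mathfrak{p})$ with $\mathfrak{p}$ a minimal prime over $B_\M$. But $b\N=0$ gives $b\in B_\N$, so $V(\mathfrak{p})\subseteq Z(B_\N)\subseteq V(b)$, whence $b\in\mathfrak{p}$, contradicting the hypothesis on $b$. Therefore $\N=0$.

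For injectivity on $\grrel\M$ I would start from an arbitrary good filtration $F^{(0)}_\bullet\M$ (one exists globally, as recalled above) and pass to its $b$-saturation
$$F_j\M:=\{m\in\M : b^k m\in F^{(0)}_j\M\ \text{for some}\ k\geq 0\}.$$
Centrality of $b$ makes this compatible with $F_\bullet\D_X^R$; it is exhaustive because $F^{(0)}_\bullet$ is, and separated because $b$ is injective on $\M$ while $\bigcap_j F^{(0)}_j\M=0$. The point to verify is that $F_\bullet\M$ is good. For this, observe that by local Noetherianity of $\M$ and a Rees-module argument the exponent $k$ can be taken independent of $j$, say $k=K$; then $F_j\M$ is exactly the preimage of the filtration $j\mapsto b^K\M\cap F^{(0)}_j\M$ under the isomorphism $b^K\colon\M\xrightarrow{\sim}b^K\M$. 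The latter filtration is the one induced on the coherent submodule $b^K\M\subseteq\M$, which is good (submodules inherit good filtrations, a standard consequence of the Noetherianity of the Rees ring of $\grrel\D_X^R$), and goodness is preserved under an isomorphism of $\D_X^R$-modules, so $F_\bullet\M$ is good. Finally, if $m\in F_j\M$ and $bm\in F_{j-1}\M$, then $b^{k+1}m=b^k(bm)\in F^{(0)}_{j-1}\M$ for some $k$, so $m\in F_{j-1}\M$; hence multiplication by $b$ is injective on $\grrel\M=\bigoplus_j F_j\M/F_{j-1}\M$.

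The only step I expect to require genuine care is the goodness of the saturated filtration; granting the standard Artin--Rees statement for good filtrations of coherent $\D_X^R$-modules, the remainder is formal bookkeeping. A secondary point that needs attention is the codimension comparison in the first part, which must be applied to $Z(B_\M)$ component by component via Lemma \ref{rem: GradeIFFBernsteinIdeal} in order to produce a minimal prime over $B_\M$ containing $b$.
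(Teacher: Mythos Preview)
The paper does not supply its own proof of this lemma; it is quoted as \cite[Lemma 3.4.2]{budur2020zeroI} and used as a black box. So there is no in-text argument to compare against, and the question is simply whether your sketch stands on its own.

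Your treatment of injectivity on $\M$ is correct. Purity gives $j(\N)=j(\M)$, Lemma~\ref{rem: GradeIFFBernsteinIdeal} converts this into $\dim Z(B_\N)=\dim Z(B_\M)$, and since $Z(B_\N)\subseteq Z(B_\M)$ any top-dimensional irreducible component of $Z(B_\N)$ is already an irreducible component of $Z(B_\M)$, hence equals $V(\mathfrak p)$ for some minimal prime $\mathfrak p$ over $B_\M$; then $b\in B_\N$ forces $b\in\mathfrak p$, contradicting the hypothesis. (For the paper's purposes this is in fact the only part of the lemma that is ever invoked: Corollary~\ref{cor: injective} uses injectivity on $\M$, not on $\grrel\M$.)

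For the filtered part, the $b$-saturation is the standard construction and injectivity of $b$ on the associated graded is, as you say, formal once goodness is known. But your justification of goodness is too thin. The phrase ``by local Noetherianity of $\M$ and a Rees-module argument'' does not by itself produce the uniform bound $K$: the Rees modules $\bigoplus_j (b^k)^{-1}(b^k\M\cap F^{(0)}_j\M)$ form an ascending chain of finitely generated modules over the Noetherian Rees ring, but they are not a priori contained in a common finitely generated module, so one cannot simply appeal to the ascending chain condition. Moreover, your subsequent identification of $F_\bullet\M$ with the pullback of the induced filtration on $b^K\M$ already presupposes that the saturation has stabilised at stage $K$, so that step is circular as written. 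The result is nonetheless true and standard for filtered modules over Auslander-regular rings; one can for instance cite \cite[Ch.~2 and App.~IV]{B}, or argue via the embedding $\grrel^{F}\M\hookrightarrow(\grrel^{F^{(0)}}\M)[b^{-1}]$ and control the image using the finitely many associated primes of $\grrel^{F^{(0)}}\M$ over the Noetherian ring $\grrel\D_X^R$. Your overall strategy is the right one; only this bookkeeping needs to be made precise.
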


\begin{corollary}\label{cor: injective}
Let $\M$ be a relative holonomic $\D_X^R$-module with $R=\bC[s_1,\ldots,s_r]$, $r>0$. 

(i) There exists a non-empty Zariski open subset $W(\M)$ of the space $R_1$ of polynomials in $R$ of degree one   
such that every $\ell \in W(\M)$ acts injectively on $\M$. 

(ii) One can assume, by shrinking $W(\M)$ if $W(\M)=R_1$, that there exists a  Zariski closed proper subset $V(\M)$ of $\bC^r$ such that  $$W(\M)=\{\ell\in R_1\mid \ell\text{ does not vanish on any irreducible component of }V(\M)\}.$$
\end{corollary}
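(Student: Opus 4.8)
The plan is to use Lemma \ref{prop: Injective3.4.2}, but first I need to reduce to the $j$-pure case, and then argue that the ``bad locus'' of bilinear forms (resp.\ linear forms) can be packaged into a Zariski closed subset of $\bC^r$. The key algebraic input is that $R=\bC[s_1,\dots,s_r]$ is Noetherian and that $\M$, being relative holonomic, has only finitely many associated primes; more concretely, Lemma \ref{rem: GradeIFFBernsteinIdeal} identifies $Z(B_\M)$ with the projection of $\Chrel\M$, a finite union $\bigcup_w (\Lambda_w\times S_w)$ projecting to $\bigcup_w S_w$, so there are only finitely many irreducible components to worry about.

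First I would reduce to purity. The module $\M$ admits a finite filtration $0=\M_0\subset \M_1\subset\cdots\subset\M_N=\M$ whose successive quotients $\M_k/\M_{k-1}$ are pure relative holonomic modules (the ``purity filtration'' coming from the codimension filtration / Gabber's theorem; each subquotient is relative holonomic by Lemma \ref{prop: SubquotientRelHol}). An element $\ell\in R_1$ acts injectively on $\M$ provided it acts injectively on each graded piece $\M_k/\M_{k-1}$: indeed if $\ell m=0$ for some $0\neq m\in\M$, take the largest $k$ with $m\in\M_k$, and then $\ell$ kills the nonzero image $\bar m\in\M_k/\M_{k-1}$. So it suffices to produce the open set $W$ working simultaneously for all the (finitely many) pure pieces, i.e.\ intersect the $W$'s obtained for each piece; a finite intersection of nonempty Zariski opens in the irreducible variety $R_1\cong\bC^r$ is again nonempty Zariski open.

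Next, for a $j$-pure relative holonomic module $\N$, Lemma \ref{prop: Injective3.4.2} says that any $b\in R$ not lying in a minimal prime of $R$ containing $B_\N$ acts injectively (after choosing a suitable good filtration). Let $\mathfrak p_1,\dots,\mathfrak p_m$ be those minimal primes; equivalently $V(B_\N)=Z(B_\N)=\bigcup_i V(\mathfrak p_i)$ has irreducible components $V(\mathfrak p_i)$. A linear form $\ell\in R_1$ lies in no $\mathfrak p_i$ exactly when $\ell$ does not vanish identically on any $V(\mathfrak p_i)$. The set of such $\ell$ is Zariski open in $R_1$: for each $i$, ``$\ell$ vanishes on $V(\mathfrak p_i)$'' is a linear condition on the coefficients of $\ell$, cutting out a proper linear subspace of $R_1$ (proper because $V(\mathfrak p_i)$ is a nonempty closed subset of $\bC^r$, so is not contained in any hyperplane unless... well, even if $V(\mathfrak p_i)$ is itself a point or a linear subspace, the forms vanishing on it form a proper subspace of $R_1$ since the zero form is not the only option only when $V(\mathfrak p_i)=\bC^r$, which cannot happen as $B_\N\neq 0$ — here one uses that $\N$ is a genuine module over $\D_X^R$ with $Z(B_\N)\subsetneq\Spec R$, a consequence of relative holonomicity, cf.\ Lemma \ref{prop: ChrelGrades}). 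Hence $W_0(\N):=\{\ell\in R_1:\ell\notin\bigcup_i\mathfrak p_i\}$ is a nonempty Zariski open subset of $R_1$, and by Lemma \ref{prop: Injective3.4.2} every $\ell\in W_0(\N)$ acts injectively on $\N$. Taking $W(\M):=\bigcap_k W_0(\M_k/\M_{k-1})$ proves (i).

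For (ii), set $V(\M):=\bigcup_{k,i}V(\mathfrak p_{k,i})$, the union over all the minimal primes of all the Bernstein--Sato ideals of the pure pieces; this is a Zariski closed subset of $\bC^r$, and it is proper since each $Z(B_{\M_k/\M_{k-1}})$ is proper in $\Spec R=\bC^r$ (Lemma \ref{rem: GradeIFFBernsteinIdeal}: codimension $\geq 1$). By construction $W(\M)=\{\ell\in R_1:\ell$ vanishes on no irreducible component of $V(\M)\}$, which is exactly the asserted description — and it is automatically a proper subset of $R_1$ once $V(\M)\neq\emptyset$ (e.g.\ if $V(\M)$ contains a point $p$, then forms through $p$ are excluded). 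In the degenerate case $W(\M)=R_1$ one can always replace $V(\M)$ by $V(\M)\cup\{0\}$ (or any single point), shrinking $W(\M)$ to $\{\ell\in R_1:\ell\text{ does not vanish at that point}\}$, which is still nonempty Zariski open and still acts injectively; this is the content of the ``by shrinking $W(\M)$ if $W(\M)=R_1$'' clause.

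The main obstacle I anticipate is the first step: cleanly invoking the existence of a purity filtration with relative holonomic graded pieces in this relative setting, and making sure the good filtrations produced by Lemma \ref{prop: Injective3.4.2} for the different graded pieces are compatible enough that one gets a single good filtration on $\M$ on which multiplication by $\ell$ is injective — although for statement (i) as phrased (injectivity as an endomorphism of the module $\M$, not of $\grrel\M$) this compatibility is not actually needed, so the reduction via the filtration argument above suffices. The secondary subtlety is verifying, in (ii), that the exceptional phrase ``shrinking $W(\M)$ if $W(\M)=R_1$'' genuinely handles the case where all the $V(\mathfrak p_{k,i})$ happen to be empty, which forces $B_{\M_k/\M_{k-1}}=R$, i.e.\ $\M_k/\M_{k-1}=0$, so in fact $\M=0$ — a case the corollary hypothesis ($r>0$ and $\M$ relative holonomic, implicitly nonzero) or a trivial separate remark disposes of.
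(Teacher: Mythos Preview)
Your overall strategy matches the paper's: filter $\M$ so that the successive quotients are pure, apply Lemma~\ref{prop: Injective3.4.2} to each piece, and intersect the resulting open sets. Part~(i) goes through along these lines. However, there is a genuine error in your argument for~(ii).

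You assert that for each pure piece $\N$ one has $Z(B_\N)\subsetneq\bC^r$, invoking relative holonomicity via Lemma~\ref{prop: ChrelGrades}. This is false. By Lemma~\ref{rem: GradeIFFBernsteinIdeal} one has $j(\N)=n+k$ with $k=\operatorname{codim} Z(B_\N)$; relative holonomicity only gives $j(\N)\ge n$, and the grade-$n$ piece of the filtration, whenever nonzero (as it is whenever $j(\M)=n$), has $B_\N=0$ and $Z(B_\N)=\bC^r$. Consequently your $V(\M)=\bigcup_{k,i}V(\mathfrak p_{k,i})$ need not be proper, and your proposed fix of adjoining a point does nothing when $V(\M)$ is already all of $\bC^r$. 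Your final claim that $W(\M)=R_1$ forces $\M=0$ is likewise wrong: any nonzero $n$-pure relative holonomic module has $W(\M)=R_1$.

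The paper's remedy is simple: it observes that for the grade-$n$ piece one has $V_n=\bC^r$ but also $W_n=R_1$ (no degree-one polynomial vanishes identically on $\bC^r$, so the hypothesis of Lemma~\ref{prop: Injective3.4.2} is automatically met by every $\ell\in R_1$), and then defines $V(\M)$ as the union only over those indices $i$ with $W_i\neq R_1$. That union is then genuinely proper, and~(ii) follows. Note that your part~(i) is not actually damaged by this oversight, since even when $V(\mathfrak p_i)=\bC^r$ the set of $\ell\in R_1$ vanishing on it is empty and $W_0(\N)=R_1$ remains nonempty open; only your justification, not your conclusion, is wrong there.
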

\begin{proof}
Denote by $\M_i$ the largest submodule of $\M$ with $j(\M_i)\geq i\ge 0$. The modules $\M_i$ exist and
 form a decreasing sequence
$$\M=\M_0\supset \M_1\supset\dots$$
by \cite[IV.1.6.(i) and  IV.2.8]{B}.
By Lemma \ref{prop: SubquotientRelHol}, $\M_i$ are also relative holonomic. Thus by Lemma \ref{rem: GradeIFFBernsteinIdeal}, $j(\M_i)\ge n$ for all $i$ with $\M_i\neq 0$, and $\M_{i}=0$ if $i>n+r$.



The successive quotients $\M_i/\M_{i+1}$ are either $0$ or pure of grade $i$, by \cite[Proposition 4.11]{robin}. Let $I$ denote the set of indices $i$ such that $\M_i/\M_{i+1}\neq 0$. If $i\in I$, let $V_i\subset \bC^r$ be the zero locus of $B_{\M_i/\M_{i+1}}$, and let $W_i$ be the set of $\ell\in R_1$ that do not vanish on any irreducible component of $V_i$. Each $W_i$ is non-empty Zariski open in $R_1$ and every $\ell\in W_i$ acts injectively on $\M_i/\M_{i+1}$ by Lemma \ref{prop: Injective3.4.2}. If $i=n$, then $V_n=\bC^r$ and $W_n=R_1$. If $n< i\le n+r$, then $V_i\subsetneq \bC^r$ and $W_i$ might still be all of $R_1$. Set $W(\M):=\cap_{i\in I}W_i$. Then $W(\M)$ is non-empty Zariksi open in $R_1$ and  every $\ell\in W(\M)$ acts injectively on $\M$. This gives (i)

If $W(\M)\subsetneq R_1$,  define $V(\M):=\cup_{i}V_i$ where the union runs over $i\in I$ such that $W_i\neq R_1$. It is clear that this satisfies (ii). 
\end{proof}

\begin{corollary}\label{commutativity}
Let $\mu:Y\to X$ be a morphism of smooth varieties, and let $\M$ be a relative holonomic $\D_Y^{R}$-module, with $R=\C[s_1,\dots,s_r]$, $r>0$. There exists a finite set $J$ and a set of points $\{\beta_{i,j}\in\bC\mid 1\le i\le n, j\in J\}$ such that for every $\al$ in the non-empty Zariski open complement $\bC^r\setminus\cup_{i,j}\{z_i-\beta_{i,j}=0\}$,  the  natural  morphism  of $\sD_X$-modules 
\be\label{eqW} \left(H^0\mu_+ \M\right)\otimes _R R/\mathfrak{m}_\alpha \to H^0\mu_+(\M\otimes_R R/\mathfrak{m}_\alpha)\ee
is an isomorphism, where $\mathfrak{m}_\alpha=(s_1-\al_1,\ldots,s_r-\al_r)$ is the maximal ideal in $R$ of $\alpha$.
\end{corollary}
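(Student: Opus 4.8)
The plan is to compute the direct image $\mu_+\M$ via a finite complex of relative holonomic $\D_X^R$-modules, reduce the base change statement to a Tor-vanishing statement, and then use the purity/injectivity machinery of Corollary \ref{cor: injective} uniformly on each piece. First I would choose, locally on $Y$, a good filtration and compute $\mu_+\M$ using a Čech–Spencer type resolution, so that $\mu_+\M$ is represented by a bounded complex $\M^\bullet$ of $\D_X^R$-modules whose terms are, up to the coherent pushforward, built from $\M$ and the transfer bimodule $\D_{Y\to X}$; since $\mu$ is a morphism of smooth varieties and $\M$ is relative holonomic, Proposition \ref{prop: DirectImageRelHol} guarantees each cohomology sheaf $H^j\mu_+\M$ is relative holonomic, and the hyper-Tor spectral sequence against $R/\mathfrak{m}_\alpha$ will control the left-hand side of \cref{eqW}.

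The key reduction is the following: for a maximal ideal $\mathfrak{m}_\alpha$ generated by the regular sequence $(s_1-\alpha_1,\ldots,s_r-\alpha_r)$ in $R=\C[s]$, derived base change gives a convergent spectral sequence
\begin{equation*}
E_2^{p,q}=\Tor^R_{-p}\bigl(H^q\mu_+\M,\, R/\mathfrak{m}_\alpha\bigr)\Longrightarrow H^{p+q}\mu_+(\M\otimes^L_R R/\mathfrak{m}_\alpha),
\end{equation*}
together with the Koszul spectral sequence computing $\M\otimes^L_R R/\mathfrak{m}_\alpha$ from $\M\otimes_R R/\mathfrak{m}_\alpha$ and the $\Tor^R_i(\M,R/\mathfrak{m}_\alpha)$. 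The map \cref{eqW} is an isomorphism as soon as (a) $\Tor^R_i(\M,R/\mathfrak{m}_\alpha)=0$ for $i>0$, so that $\M\otimes^L_R R/\mathfrak{m}_\alpha\simeq \M\otimes_R R/\mathfrak{m}_\alpha$ sits in a single degree, and (b) $\Tor^R_i(H^q\mu_+\M,R/\mathfrak{m}_\alpha)=0$ for $i>0$ and for the relevant $q$, which collapses the first spectral sequence to the edge map that is exactly \cref{eqW}. Both vanishing statements are of the same nature: for a finitely generated $R$-module $N$ over the polynomial ring $R$, $\Tor^R_i(N,R/\mathfrak{m}_\alpha)=0$ for all $i>0$ precisely when the sequence $(s_1-\alpha_1,\ldots,s_r-\alpha_r)$ is $N$-regular, equivalently when $\alpha$ avoids a suitable proper closed subset determined by the associated primes of $N$.

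To make the exceptional set of $\alpha$'s of the stated product form $\bigcup_{i,j}\{z_i-\beta_{i,j}=0\}$, I would apply Corollary \ref{cor: injective} to each of the finitely many relative holonomic modules that arise: $\M$ itself, its graded pieces $\M_i/\M_{i+1}$ in the grade filtration, and likewise the finitely many $H^q\mu_+\M$. For each such module $\N$, Corollary \ref{cor: injective}(ii) produces a proper closed subset $V(\N)\subseteq\bC^r$ such that every degree-one polynomial $\ell$ not vanishing on a component of $V(\N)$ acts injectively on $\N$; in particular, after taking $\N$ through its grade filtration, one gets a filtration each of whose quotients is pure, and on each pure quotient a linear form avoiding the associated closed set is a non-zerodivisor. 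Choosing the finitely many components of all these $V(\N)$, and for each component a collection of affine-linear equations cutting it out, one extracts finitely many complex numbers $\beta_{i,j}$ (the "critical coordinate values") with the property that if $\alpha_i\neq\beta_{i,j}$ for all $i,j$ then each of the coordinate forms $s_i-\alpha_i$ is simultaneously a non-zerodivisor on all the relevant modules and their successive pure quotients; inductively this makes $(s_1-\alpha_1,\ldots,s_r-\alpha_r)$ a regular sequence on each, giving the Tor-vanishing in (a) and (b). I expect the main obstacle to be the bookkeeping in this last step: one must ensure that a \emph{single} generic choice of $\alpha$, cut out by hyperplanes of the special form $\{z_i=\beta_{i,j}\}$, simultaneously renders the coordinate sequence regular on $\M$, on all higher direct images, and on all the intermediate pure subquotients appearing in the two spectral sequences — this requires carefully tracking that passing from "a generic linear form is a non-zerodivisor" (Corollary \ref{cor: injective}) to "a generic \emph{coordinate} form is a non-zerodivisor, and the coordinates form a regular sequence" only costs finitely many more hyperplanes of the allowed shape, using that a finitely generated module over $\C[s]$ has only finitely many associated primes and each associated prime is avoided by a coordinate $s_i-\alpha_i$ for all but finitely many values of $\alpha_i$.
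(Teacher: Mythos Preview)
Your approach via a base-change spectral sequence and Tor-vanishing is a legitimate alternative, but it is genuinely different from the paper's argument. The paper proceeds by peeling off one variable at a time: for $\ell=s_1-\alpha_1$ injective on $\M$ (Corollary~\ref{cor: injective}), the long exact sequence for $\mu_+$ applied to $0\to\M\xrightarrow{\ell}\M\to\M/\ell\M\to 0$ identifies the cokernel of $\ell$ on $H^0\mu_+\M$ with a submodule of $H^0\mu_+(\M/\ell\M)$, and the quotient is the kernel of $\ell$ on $H^1\mu_+\M$; requiring also $\ell\in W(H^1\mu_+\M)$ kills this kernel, and one then inducts on $r$ with $\M$ replaced by $\M/\ell\M$ over $\bC[s_2,\ldots,s_r]$. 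Thus at each step the paper only invokes Corollary~\ref{cor: injective} on two modules ($\M$ and $H^1\mu_+\M$) and never writes down a spectral sequence. Your route is more structural and makes transparent that the content is a flatness/Tor-vanishing condition, at the cost of needing to control all $H^q\mu_+\M$ for $1\le q\le r$ simultaneously (and implicitly assuming $\mu$ proper so that these are relative holonomic, which the paper also uses without stating).

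One point to correct: your final appeal to ``finitely many associated primes of a finitely generated $R$-module'' does not apply as stated, since $\M$ and the $H^q\mu_+\M$ are coherent over $\D^R$ but not over $R$. The right substitute is exactly Corollary~\ref{cor: injective} via the grade filtration, which you also invoke, so this is a matter of phrasing rather than a real gap. The delicate issue you flag --- that the bad values for $\alpha_i$ obtained after quotienting by $s_1-\alpha_1,\ldots,s_{i-1}-\alpha_{i-1}$ a priori depend on the earlier $\alpha_j$ --- is equally present in the paper's induction and is not spelled out there either; for the application in Lemma~\ref{gradeBound} one only needs that the good locus contains integer points with all coordinates $\ll 0$, which both arguments deliver without difficulty.
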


\begin{proof} Let $\gamma\in\bC^r$. First, let $\ell\in W(\M)\subset R_1$ be a polynomial of degree one, with $W(\M)$ as in  Corollary \ref{cor: injective}.  Then multiplication by $\ell$ on $\M$ followed by the direct image induces a long exact sequence of $\D_X^R$-modules
  $$0 \to H^0\mu_+\M \xrightarrow{\ell\cdot} H^0\mu_+ \M \to H^0\mu_+ \left(\M\otimes_{R} R/(\ell)\right) \to H^1\mu_+\M \xrightarrow{\ell\cdot} H^1\mu_+\M.$$
 Thus $(H^0\mu_+ \M)\otimes_{R} R/(\ell)$ is a $\sD_X$-submodule of $H^0\mu_+ (\M\otimes_{R} R/(\ell))$.
  Their quotient is isomorphic to the kernel of $\ell$ on $H^1\mu_+\M$. We can assume further that $\ell\in W(\M)\cap W(H^1\mu_+\M)$ since the intersection is Zariski open and dense. Then this kernel is zero, and hence
  $$\left(H^0\mu_+ \M\right)\otimes_{R} R/(\ell)\simeq H^0\mu_+(\M\otimes_{R} R/(\ell)).$$ 
By   Corollary \ref{cor: injective}, we can assume $W(\M)\cap W(H^1\mu_+\M)$ is the set of $\ell\in R_1$ that do not vanish on any irreducible component of a Zariski closed proper subset $V(\M)\cup V(H^1\mu_+\M)$ of $\bC^r$. Thus, there exists a finite set $J$ and a  set of points $\{\beta_{1,j}\in\bC\mid j\in J\}$ such that
 $\ell=s_1-\al_1\in W(\M)\cap W(H^1\mu_+\M)$ for $\al_1\in\bC\setminus\{\beta_{1,j}\mid j\in J\}$.

If $r=1$, the above argument gives the claim. If $r>1$, we proceed  by induction since $R/(s_1-\al_1) \simeq \bC[s_2,\ldots,s_r]$.
\end{proof}

 \section{Upper bounds}\label{secUB}
 
 We consider first the algebraic case of \Cref{thm: MainTheorem}. Since $B_F^a$ is the intersection of all local $B_{F,x}^a$, we may assume that $X$ is affine and admits local coordinates $x_1,\ldots, x_n$.
 Let $\mu$ be a strong log resolution of $f$ as in the introduction,  $G = F\circ\mu$, and let $g_j=f_j\circ\mu$.  As in the introduction, we use the notation $\sD_X[s]$ for $\sD_X^R$ if $R=\bC[s]$.

 \subsection{Translation to right modules}
 By the translation between left and right modules in \ref{sec: RelHol} the functional equation $P F^{s+a} = b(s) F^s$ may be restated as the equation
 $F^{s+a}dx \cdot P^* = b(s) F^s dx $
 in $$\N:=\D_X[s] F^s \otimes_{\O_X}\omega_X = F^sdx\cdot \D_X[s].$$
 Define $\M$ to be the submodule of $ \D_Y[s] G^s\otimes_{\O_Y}\omega_Y$ spanned by $G^s \mu^*(dx)$ over $\D_Y[s]$,
 $$
 \M:= G^s \mu^*(dx)\cdot \D_Y[s].
 $$
 \begin{lemma}
   The right $\D_Y[s]$-module $\M$ is relative holonomic.
 \end{lemma}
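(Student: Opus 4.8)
The plan is to realise $\M$ as a submodule (indeed a sub-$\sD_Y[s]$-module) of something already known to be relative holonomic, and then invoke \Cref{prop: SubquotientRelHol}. The natural candidate is the relative Bernstein-Sato module associated to the tuple $G=(g_1,\dots,g_r)$ on $Y$, namely $\sD_Y[s]G^s$ (on the left-module side) and its right-module avatar $G^s\mu^*(dx)\cdot\sD_Y[s]\subseteq \sD_Y[s]G^s\otimes_{\O_Y}\omega_Y$. So the first thing I would do is recall from the cited papers (e.g.\ \cite{budur2020zeroI}, \cite{budur2020zeroII}) that for \emph{any} tuple of regular functions the module $\sD_Y[s]G^s$ is relative holonomic; equivalently, the relative characteristic variety of $\sD_Y[s]G^s\otimes_\C\C[s]$ over $Y$ has the required product form $\bigcup_w \Lambda_w\times S_w$ with $\Lambda_w$ conic Lagrangian in $T^*Y$. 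By the left-right equivalence $\blank\otimes_{\O_Y}\omega_Y$ (which preserves coherence and the relative characteristic variety, being an $\O_Y$-linear equivalence intertwining the $\sD_Y[s]$-structures), $\sD_Y[s]G^s\otimes_{\O_Y}\omega_Y$ is relative holonomic as a right module. Then $\M$, being a $\sD_Y[s]$-submodule of it, is relative holonomic by \Cref{prop: SubquotientRelHol} — but one must first check $\M$ is coherent, which it is, since it is cyclic (generated by the single section $G^s\mu^*(dx)$) over the Noetherian sheaf of rings $\sD_Y[s]=\sD_Y^R$ with $R=\C[s]$.

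Alternatively, and perhaps more in keeping with the paper's conventions, one can argue directly on $Y$ using the fact that $\mu^*D$ is simple normal crossings, so that locally $g_j = u_j \prod_i x_i^{e_{ij}}$ with $u_j$ a unit and $e_{ij}=\ord_{E_i}(f_j)$. In such coordinates $G^s\mu^*(dx) = (\text{unit})\cdot \prod_i x_i^{\sum_j e_{ij}s_j}\,dx$, and one computes $x_k\partial_k$ acting on this generator gives a scalar in $\C[s]$ (plus lower-order terms from $du_j$), which shows that $\M$ is, near each point, a cyclic module over $\sD_Y[s]$ whose relative characteristic variety is contained in the union over subsets $I$ of the coordinate hyperplanes of (conormal to $\{x_i=0,\,i\in I\}$) $\times$ (the subvariety of $\Spec\C[s]$ where the relevant combinations of the $s_j$ are constrained). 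This is visibly of the form required by \Cref{def: RelHolonomic}. I would write this out just enough to exhibit the conic Lagrangian $\times$ subvariety decomposition, citing \Cref{prop: SESBehaviourChrel} to patch the local pieces if needed.

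The main obstacle I anticipate is purely bookkeeping rather than conceptual: making sure that the module I compare $\M$ against is genuinely \emph{known} to be relative holonomic with the cited references actually covering the multivariate $r>1$ case (this is exactly the point the introduction flags as the reason a ``new idea'' was needed, and it is resolved in \cite{maisonobe2016filtration}, \cite{budur2020zeroI}, \cite{budur2020zeroII}). Once that input is in hand, the passage to the submodule $\M$ and to right modules is formal. I would therefore structure the proof as: (1) cite relative holonomicity of $\sD_Y[s]G^s$; (2) note the left-right equivalence preserves this; (3) observe $\M$ is coherent since cyclic; (4) conclude by \Cref{prop: SubquotientRelHol}. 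If one instead prefers the self-contained SNC-coordinate computation, step (1) is replaced by the explicit local description of $\Chrel\M$, but the overall shape is the same.
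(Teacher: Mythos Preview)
Your primary approach is correct and essentially identical to the paper's proof: the paper cites \cite[R\'esultat 1]{maisonobe2016filtration} for the relative holonomicity of $\sD_Y[s]G^s$, passes to the right module $\sD_Y[s]G^s\otimes_{\O_Y}\omega_Y$, and concludes via \Cref{prop: SubquotientRelHol} that the submodule $\M$ is relative holonomic. Your added remark on coherence via cyclicity and your alternative SNC-coordinate argument are fine but unnecessary here, since \Cref{prop: SubquotientRelHol} already handles subquotients directly.
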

 \begin{proof}
   The left $\D_Y[s]$-module $\D_Y[s]G^s$ is relative holonomic by \cite[R\'esultat 1]{maisonobe2016filtration}.
   Then the associated right module $\D_Y[s]G^s\otimes_{\O_Y} \omega_Y$ is also relative holonomic.
   Hence, Lemma \ref{prop: SubquotientRelHol} implies that the submodule $\M$ is also relative holonomic.
 \end{proof}
 \subsection{$\D_X[s]\langle t\rangle$-modules}
 Let $\D_X[s]\langle t \rangle$ denote the sheaf of rings obtained from $\D_X[s]$ by adding a new variable $t$ which commutes with sections of $\D_X$ and is subject to $s_j t = t(s_j +a_j)$ for every $j=1,\ldots, r$ .
 The $\D_X[s]$-module $\N$ may be equipped with the structure of a right $\D_X[s]\langle t\rangle$-module by the action
 $$F^sdx \cdot P(x,\partial,s) \cdot t  =F^{s + a}dx\cdot P(x,\partial, s + a).$$
 In this formalism $B_F^a$ is the Bernstein-Sato ideal of $\N/ \N t$.
 An analogous $\D_Y[s] \langle t\rangle$-module structure can be given to $\M$.
 \begin{lemma}\label{lem: BernsteinSatoPolynomialUpstairs}
  The Bernstein-Sato ideal $B_{\M/\M t}$ contains a polynomial of the form
  $$b(s)=\prod_E \prod_{j=1}^N(\ord_{E}(g_1) s_1 + \cdots + \ord_{E}(g_r) s_r + k_E+ j)$$
  where $E$ ranges over the irreducible components of $\mu^*D$, for some $N\in \mathbb{Z}_{\geq 0}$.
 \end{lemma}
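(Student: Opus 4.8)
The plan is to compute the Bernstein-Sato ideal upstairs on $Y$, where the resolved map $G=F\circ\mu$ has simple normal crossings support, and then to produce a $b(s)$ of the advertised shape by multiplying together the ``local'' Bernstein-Sato relations coming from each monomial chart. First I would work locally on $Y$ in a chart with coordinates $y_1,\dots,y_n$ adapted to $\mu^*D$, so that $g_j = u_j \prod_i y_i^{m_{ij}}$ with $u_j$ a unit and $m_{ij}=\op{ord}_{E_i}(g_j)$, and $\mu^*(dx) = v\, y_1^{k_1}\cdots y_n^{k_n}\, dy$ with $v$ a unit and $k_i = k_{E_i}$. In such a chart the section $G^s\mu^*(dx)$ is, up to a unit, $\big(\prod_j u_j^{s_j}\big)\, y_1^{k_1+\sum_j m_{1j}s_j}\cdots y_n^{k_n+\sum_j m_{nj}s_j}\, dy$, i.e. a pure monomial in the $y_i$ with exponents that are affine-linear in $s$.

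The key computation is then the classical one: for the monomial $y^{\nu(s)}$ with $\nu_i(s) = k_i + \sum_j m_{ij} s_j$, applying $y_i\partial_{y_i}$ (or rather its adjoint on the right-module side) multiplies the symbol by a factor $\nu_i(s+a) = k_i + \sum_j m_{ij}(s_j+a_j)$, which differs from $\nu_i(s)$ by the positive integer $\sum_j m_{ij}a_j$. Iterating, the operator $t$ acting on $G^s\mu^*(dx)$ can be realized, up to the unit factors, by multiplication by $\prod_i \prod_{\ell=1}^{\sum_j m_{ij}a_j}\big(\sum_j m_{ij}s_j + k_i + \ell\big)$ composed with a differential operator; hence a polynomial of the form $\prod_i\prod_{j=1}^{N_i}(\op{ord}_{E_i}(g_1)s_1+\cdots+\op{ord}_{E_i}(g_r)s_r + k_{E_i}+j)$ lies in the local $B^a_{\M/\M t}$ for this chart, for $N_i$ large enough (one may take $N_i = \sum_j m_{ij}a_j$, but any larger integer also works, which is what allows a uniform $N$). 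Taking $N := \max$ over all charts and all components $E$ of the exponents that occur, the single polynomial $b(s)=\prod_E\prod_{j=1}^N(\sum_j \op{ord}_E(g_j)s_j + k_E + j)$ — product over the finitely many components $E$ of $\mu^*D$ — annihilates $\M/\M t$ in every chart, hence globally, since $Y$ is covered by finitely many such charts and $\M/\M t$ is coherent.

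The main obstacle I anticipate is bookkeeping the unit factors $u_j$ and $v$ correctly: they force one to commute the monomial-producing differential operators past functions that are not polynomials in the $y_i$ but genuine units, so the naive ``$t$ acts by multiplication by the product of linear forms'' identity holds only modulo an operator applied on the right, and one must check that this is enough to conclude $b(s)\cdot(G^s\mu^*(dx)) \in G^s\mu^*(dx)\cdot\D_Y[s]\cdot t$, i.e. $b(s)$ kills $\M/\M t$ locally. Concretely, one writes $G^{s+a}\mu^*(dx) = (\text{unit})\cdot G^s\mu^*(dx)\cdot y^{a\cdot m}$ and then expresses the monomial $y^{a\cdot m}$ (times the unit) as $G^s\mu^*(dx)$ hit by a suitable operator obtained by repeatedly applying the Euler-type operators $y_i\partial_{y_i} - \nu_i(s+\cdot)$; the resulting identity after clearing yields precisely the linear factors, and the freedom to enlarge each $N_i$ absorbs the contributions of the lower-order terms produced when the vector fields differentiate the units. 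A secondary, purely organizational point is that one should phrase the chart-wise statement as membership of $b(s)$ in the local Bernstein-Sato ideal and then glue, rather than trying to pick global coordinates on $Y$.
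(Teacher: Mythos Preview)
Your approach is the same as the paper's: pass to the local analytic germ at a point of $Y$, choose coordinates in which the $E_i$ are coordinate hyperplanes, and exhibit an explicit operator $P\in\D_{Y}[s]$ with $G^{s+a}\mu^*(dx)\cdot P=q(s)\,G^s\mu^*(dx)$ for $q(s)$ the product of linear forms. The paper simply writes $P=v^{-1}\prod_i(-\partial_i)^{N_i}\,v$ with $N_i=\sum_j a_j\,\ord_{E_i}(g_j)$ and reads off $q(s)=\prod_i\prod_{\ell=1}^{N_i}(k_i+\sum_j m_{ij}s_j+\ell)$; the gluing step and the passage to a uniform $N$ are exactly as you describe.

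Your concern about the units $u_j$ is legitimate (the paper's displayed formula for $G^s$ silently sets them to $1$), but your proposed fix---enlarging $N$---is not the right mechanism, and would not by itself close the gap. The unit contributions are not absorbed by extra linear factors; they are cancelled exactly. The corrected operator is
\[
P\;=\;U^{-1}\prod_i\Bigl(-\partial_i-\sum_{j}s_j\,\frac{\partial_i u_j}{u_j}-\frac{\partial_i v}{v}\Bigr)^{N_i},\qquad U=\prod_j u_j^{a_j},
\]
which lies in $\D_Y[s]$ (only logarithmic derivatives of the $u_j$ appear, even though $\prod_j u_j^{s_j}$ itself does not) and yields the \emph{same} $q(s)$ with $N_i=\sum_j a_j m_{ij}$, no enlargement needed. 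Equivalently, your Euler-operator idea works if run as a filtration argument: the quotient $\bigl(G^s\mu^*(dx)\bigr)\cdot y^{N'}\D_Y[s]\big/\bigl(G^s\mu^*(dx)\bigr)\cdot y^{N'+e_i}\D_Y[s]$ is annihilated by the single linear form $k_i+\sum_j m_{ij}s_j+N'_i+1$ (the terms coming from $\partial_i u_j$ already lie in the smaller submodule), and composing these along a path from $0$ to $N$ gives $q(s)$.
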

 \begin{proof}
  The proof is analogous to the one in   \cite[Section 4]{lichtin1989poles}. 
  We can reduce the claim to the local analytic Bernstein-Sato ideal at  a point $y\in Y$ lying in the support of $\mu^*D$,  since $B_{\M/\M t}$ is the intersection of the local analytic Bernstein-Sato ideals. Let $E_i$ with $i\in I$ be the local analytic irreducible components of $\mu^*D$ at $y$. We can assume that there are  local analytic coordinates $z_1,\ldots,z_n$ where every $E_i$ with $i\in I$ is determined by some $z_{j_i}$. After relabeling, we may be assumed that $j_i = i$. In these local coordinates
 $$G^s = \prod_{i\in I} z_i^{\sum_{j=1}^r   \ord_{E_i}(g_j)s_j}\qquad\text{ and }\qquad \mu^*(dx) = v \prod_{i\in I} z_i^{k_i} dz$$
 where $v$ is a local unit.
 Let $$P = v^{-1}\left(\prod_{i\in I}(-\partial_i)^{\sum_{j=1}^r a_j\ord_{E_i}(g_j)}\right) v.$$ Then
 $$G^{s+a}\mu^*(dx) \cdot P =  q(s) G^s \mu^*(dx) $$
 where
 $$q(s) = \prod_{i\in I}\left(\sum_{j=1}^r \ord_{E_i}(g_j)s_j + \sum_{j=1}^r a_j\ord_{E_i}(g_j) + k_i\right)\cdots\left(\sum_{j=1}^r \ord_{E_i}(g_j)s_j + 1 + k_i \right).$$
 \end{proof}

   The $\D_X$-linear endomorphism $t$ induces an endomorphism on $H^0\mu_+\M$.
  The relation $s_it = t(s_i +a_i)$ also holds on $H^0\mu_+\M$ due to the functoriality of the direct image.
  Hence $H^0\mu_+\M$ is equipped with the structure of a $\D_X[s]\langle t\rangle$-module.

  The surjection of right $\D_Y[s]$-modules $\D_Y[s]\to \M$ defined by $1\mapsto G^s \mu^*(dx)$ induces a morphism $H^0\mu_+ (\D_Y[s] )\to H^0\mu_+\M$.
  Observe that $H^0\mu_+ (\D_Y[s] ) = \mu_*(\D_{Y\to X}\otimes_\bC\bC[s])$ contains a global section corresponding to $1\otimes 1$.
  We write $u$ for the image of this section in $H^0\mu_+ \M$, and $\U$ for the right $\D_X[s]\langle t\rangle$-submodule generated by $u$.
  \begin{lemma}\label{lem: SurjectionUF}
  There is a surjective morphism of right $\D_X[s]\langle t \rangle$-modules $\U\to \N$ sending $u$ to $F^sdx$.
\end{lemma}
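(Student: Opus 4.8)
The plan is to construct the map $\U\to\N$ directly as the restriction to $\U$ of a natural comparison morphism $H^0\mu_+\M\to\N$, and then to check surjectivity on $\U$ by a localisation argument. First I would recall that downstairs we have $\N=\D_X[s]F^s\otimes_{\O_X}\omega_X=F^sdx\cdot\D_X[s]$, and that $F^sdx$ is annihilated under the $\langle t\rangle$-action exactly as the definition of $t$ prescribes; the $\D_X[s]\langle t\rangle$-module structure on $H^0\mu_+\M$ has already been set up. So the content is to produce a morphism of right $\D_X[s]$-modules $H^0\mu_+\M\to \N$ (compatible with $t$) carrying the distinguished global section $u$ to $F^sdx$, and then set $\U=u\cdot\D_X[s]\langle t\rangle$ and take the image. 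Compatibility with the full ring $\D_X[s]\langle t\rangle$ is then automatic once we know the map is $\D_X[s]$-linear and sends $u\mapsto F^sdx$, because both sides are generated over $\D_X[s]\langle t\rangle$ by those elements and the relations $s_it=t(s_i+a_i)$ hold on both.

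To build the $\D_X[s]$-linear map, I would work on the locus $V=Y\setminus\mu^{-1}(D)\xrightarrow{\ \sim\ } U=X\setminus D$, where $\mu$ is an isomorphism. Over $U$, both $G^s\mu^*(dx)\cdot\D_Y[s]$ and $F^sdx\cdot\D_X[s]$ become the module $\bC[x,f^{-1},s]F^sdx$ (the "trivial" rank-one relative $\D$-module twisted by $\omega$), and the direct image $H^0\mu_+$ of a $\D$-module supported with this generic behaviour admits, by the projection/adjunction for $\mu_+$ and the fact that $\mu$ is proper birational, a canonical morphism to $\N$: concretely, one uses that $\N$ is the image of the natural map $H^0\mu_+(\text{localized module})\to \bC[x,f^{-1},s]F^sdx$, or equivalently one invokes the universal property of the direct image together with the surjection $\D_Y[s]\to\M$, $1\mapsto G^s\mu^*(dx)$, which after applying $H^0\mu_+$ and composing with the trace/counit $H^0\mu_+\mu^!(-)\to(-)$ (legitimate here since $\mu_+H^0\mu_+\O\to\O$ exists for a log resolution) produces $H^0\mu_+(\D_Y[s])\to H^0\mu_+\M\to\N$ sending $1\otimes1\mapsto u\mapsto F^sdx$. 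I then restrict this composite to the submodule $\U$.

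For surjectivity, the target $\N=F^sdx\cdot\D_X[s]$ is generated over $\D_X[s]$ by the single element $F^sdx$, which lies in the image (it is the image of $u$). Hence the image of $\U\to\N$ is a $\D_X[s]$-submodule of $\N$ containing $F^sdx\cdot\D_X[s]=\N$, so the map is surjective. (More carefully: $\U$ contains $u\cdot\D_X[s]$, whose image is $F^sdx\cdot\D_X[s]=\N$, so already the restriction to this sub-$\D_X[s]$-module is onto.)

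The main obstacle I anticipate is making precise the "natural comparison morphism" $H^0\mu_+\M\to\N$ — i.e. checking that the map defined over the isomorphism locus $U$ extends across $D$ to a global morphism of $\D_X[s]$-modules, and that it is well-defined independently of presentations. This is where one needs properness of $\mu$ (so that $H^0\mu_+$ behaves well and coherence of Lemma in Section~2 applies), the relative holonomicity of $\M$ established above, and the standard fact that for a projective birational $\mu$ one has a functorial trace $H^0\mu_+(\D_{Y\to X}\otimes\bC[s])\to\D_X[s]$-module structures matching the symbols $G^s\mu^*(dx)\leftrightarrow F^sdx$; concretely this amounts to the computation $\mu_*\O_Y(K_\mu)\hookrightarrow \O_X$ refined with the $F^s$-twist. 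Once that extension is in hand — and it is the analytic-local computation of $\mu_+$ near a normal crossings point, which parallels Lemma~\ref{lem: BernsteinSatoPolynomialUpstairs} — the rest is formal. The $t$-equivariance is the only other thing to spell out, and it follows because $t$ on $H^0\mu_+\M$ was defined precisely so as to be compatible with the $t$-action downstairs via the functoriality of $\mu_+$.
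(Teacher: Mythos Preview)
Your core localisation idea---compare over $U=X\setminus D$ where $\mu$ is an isomorphism---is the same as the paper's, but your execution takes an unnecessary detour. You try to build a global morphism $H^0\mu_+\M\to\N$ via trace/adjunction and then restrict to $\U$; you correctly flag this construction as the main obstacle, and indeed making it precise requires work you have not done (it is not clear that $\M$ is of the form $\mu^!\N$ in any useful sense, nor that the trace morphism you invoke lands in $\N$ rather than in its localisation $j_*j^*\N$).

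The paper bypasses this entirely. Since $\U$ is generated by $u$ over $\D_X[s]\langle t\rangle$, defining a map $\U\to\N$ by $u\mapsto F^sdx$ only requires checking that $\Ann(u)\subset\Ann(F^sdx)$. So suppose $uP=0$ on some open $V\subseteq X$. Over $V\setminus D$ the modules $\U$, $H^0\mu_+\M$ and $\N$ are all canonically identified (because $\mu$ is an isomorphism there), so $(F^sdx)P=0$ on $V\setminus D$. Hence the coherent $\O_V$-module $\O_V\cdot(F^sdx)P$ is supported in $D$, so $f^N(F^sdx)P=0$ for some $N\gg 0$. But $\N\subset\bC[x,f^{-1},s]F^sdx$ is $f$-torsion-free, so $(F^sdx)P=0$. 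That is the whole argument; surjectivity is then immediate since $F^sdx$ generates $\N$.

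In short: the missing ingredient in your proposal is not a trace map but the observation that $\N$ has no $f$-torsion, which turns the comparison over $U$ into well-definedness over all of $X$ with no further machinery.
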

\begin{proof}
  This is analogous to the corresponding absolute result \cite[Chapter 5, p246]{bjork1979rings}. 
  One must  show that $(F^s dx)P = 0$ whenever $uP = 0$ for some differential operator $P$ over an open $V\subseteq X$.

  The resolution of singularities $Y\to X$ is an isomorphism over the complement of the divisor $D$ determined by $f$.
  This induces  isomorphisms $\U \simeq H^0\mu_+ \M \simeq  \N$  outside of $D$.
  It follows that the support of the coherent sheaf of $\O_V$-modules $\O_V ((F^s dx) P) $ lies in $D$.
  Thus $f^N ((F^s dx) P)  = 0$ for some sufficiently large $N\geq 0$.
  Note that $f$ is a non-zero divisor of $\N(V)$.
  Therefore, $(F^s dx) P= 0$ on $V$ as desired.
\end{proof}

\begin{lemma}\label{gradeBound}
The module $(H^0\mu_+\M)/\U$ is relative holonomic, and $j((H^0\mu_+\M)/\U)>n$.
\end{lemma}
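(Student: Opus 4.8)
The plan is to first establish relative holonomicity and then leverage the homological characterization of the grade via \Cref{rem: GradeIFFBernsteinIdeal}. For relative holonomicity: by \Cref{prop: DirectImageRelHol} the module $H^0\mu_+\M$ is relative holonomic since $\mu$ is proper (being a log resolution) and $\M$ is relative holonomic. The submodule $\U$ is generated by a single section $u$, hence is coherent, and being a submodule of a relative holonomic module it is relative holonomic by \Cref{prop: SubquotientRelHol}. Therefore the quotient $(H^0\mu_+\M)/\U$ is relative holonomic, again by \Cref{prop: SubquotientRelHol} (it is a subquotient of a relative holonomic module).

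For the grade bound $j((H^0\mu_+\M)/\U) > n$, by \Cref{rem: GradeIFFBernsteinIdeal} it suffices to show that $Z(B_{(H^0\mu_+\M)/\U})$ has positive codimension in $\Spec R = \bC^r$, i.e. that the Bernstein-Sato ideal $B_{(H^0\mu_+\M)/\U}$ is non-zero. The idea is to exhibit a non-zero polynomial annihilating the quotient. By \Cref{lem: SurjectionUF} there is a surjection $\U\to \N$, so $\N$ is a quotient of $\U$; composing, $(H^0\mu_+\M)/\U$ is a quotient of $(H^0\mu_+\M)/(\text{the kernel of }H^0\mu_+\M\to\N)$ — but more directly, I would argue that the support of $(H^0\mu_+\M)/\U$ lies over the singular locus. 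Outside $D$, the resolution is an isomorphism, so the maps $\U\simeq H^0\mu_+\M\simeq \N$ are isomorphisms there (as noted in the proof of \Cref{lem: SurjectionUF}); hence $(H^0\mu_+\M)/\U$ is supported on $\mu^{-1}(D)$, equivalently its image under $H^0\mu_+$-type considerations is a $\sD_X$-module supported on $D\subseteq X$.

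The key point then is: the polynomial $b(s)$ from \Cref{lem: BernsteinSatoPolynomialUpstairs} annihilates $\M/\M t$, and via the direct image this should force $b(s)$ (or a product of such, with $t$ replaced appropriately) to act as a non-zero-divisor or to control the $R$-action on $(H^0\mu_+\M)/\U$. More precisely, I would use the $\D_X[s]\langle t\rangle$-module structure: $b(s)\,u \in u\cdot t\cdot \D_X[s]$ upstairs pushes down to a relation $b(s)\,u \in u\,t\,\D_X[s]$ in $H^0\mu_+\M$, and since the image of $u$ generates a copy of $\N$ on which $B_F^a$ acts, while the full $H^0\mu_+\M$ differs from $\U$ only over $D$, a localization/support argument shows $B_{(H^0\mu_+\M)/\U}\supseteq (\text{something containing a nonzero element})$. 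Concretely: pick $\ell\in W(H^0\mu_+\M)$ as in \Cref{cor: injective} acting injectively on $H^0\mu_+\M$; combined with the fact that $(H^0\mu_+\M)/\U$ is zero over the dense open $X\setminus D$, one deduces the relative characteristic variety of $(H^0\mu_+\M)/\U$ projects to a proper subvariety of $\bC^r$, giving $j > n$ by \Cref{rem: GradeIFFBernsteinIdeal}.

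The main obstacle I anticipate is making the "supported over $D$ $\Rightarrow$ Bernstein-Sato ideal non-zero" implication precise, since a $\sD_X$-module supported on a divisor can still have the whole of $\bC^r$ as the projection of its relative characteristic variety in bad cases. The resolution is presumably that $\M$ itself, and hence $H^0\mu_+\M$, has relative characteristic variety whose $\bC^r$-projection is controlled — indeed the zero locus of $B_\M$ is governed by the roots of $b(s)$ from \Cref{lem: BernsteinSatoPolynomialUpstairs} — so any subquotient supported over $D$ inherits a $\bC^r$-projection contained in the union of the relevant hyperplanes, which is certainly a proper (codimension $\geq 1$) subset. Thus the genuine content is: (i) $\Chrel(H^0\mu_+\M)$ projects onto a proper subvariety of $\bC^r$ after restricting to the part lying over $D$, and (ii) $(H^0\mu_+\M)/\U$ lives entirely over $D$. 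Step (ii) is essentially \Cref{lem: SurjectionUF}'s proof; step (i) should follow from \Cref{lem: BernsteinSatoPolynomialUpstairs} together with the behaviour of relative characteristic varieties under direct image (\Cref{prop: DirectImageRelHol}) and short exact sequences (\Cref{prop: SESBehaviourChrel}).
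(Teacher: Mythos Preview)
Your argument for relative holonomicity is fine and matches the paper. The reduction of $j(\L)>n$ to $B_\L\neq 0$ via \Cref{rem: GradeIFFBernsteinIdeal} is also correct, and the observation that $\L=(H^0\mu_+\M)/\U$ is supported over $D$ is correct and useful. The gap is in your proposed resolution of the obstacle you yourself flag.

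Your step (i), that the components of $\Chrel(H^0\mu_+\M)$ lying over $D$ project to a proper subvariety of $\bC^r$, is simply false. Already $\N=F^sdx\cdot\D_X[s]$, which is a subquotient of $H^0\mu_+\M$ via \Cref{lem: SurjectionUF}, has relative characteristic variety containing $T^*_DX\times\bC^r$ (or components thereof); by \Cref{prop: SESBehaviourChrel} this sits inside $\Chrel(H^0\mu_+\M)$. So $H^0\mu_+\M$ has components over $D$ projecting onto all of $\bC^r$, and knowing $\Chrel(\L)$ lies over $D$ gives nothing. The appeal to \Cref{lem: BernsteinSatoPolynomialUpstairs} does not help either: the polynomial $b(s)$ there annihilates $\M/\M t$, not $\M$; under direct image this controls $(H^0\mu_+\M)/(H^0\mu_+\M)t$, which is a different quotient from $\L$, and there is no evident map relating them that would transfer the annihilator.

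The paper's proof takes a genuinely different route. One fixes $\alpha\in\bZ^r$ with all $\alpha_i\ll 0$ and shows $\L\otimes_{\bC[s]}\bC_\alpha=0$. Two nontrivial inputs are needed. First, \Cref{commutativity} lets one commute $H^0\mu_+$ with $\otimes\,\bC_\alpha$ for generic $\alpha$; then for $\alpha\in\bZ^r_{\ll 0}$ one has $\M\otimes\bC_\alpha\simeq \omega_Y\otimes\O_Y[g^{-1}]$, and the Riemann--Hilbert correspondence together with the perversity of $Rj_*\bC_U[n]$ (affine open embedding) identifies $H^0\mu_+(\M\otimes\bC_\alpha)$ with $\N\otimes\bC_\alpha=F^\alpha dx\cdot\D_X$. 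The map from $\U\otimes\bC_\alpha$ sends $u$ to the generator $F^\alpha dx$, hence is surjective, so $\L\otimes\bC_\alpha=0$. Second, and this is a point your sketch does not address at all, one needs that $\L\otimes\bC_\alpha=0$ for a single $\alpha$ forces $\Ann_{\bC[s]}\L\neq 0$; since $\L$ is not finitely generated over $\bC[s]$, Nakayama is unavailable, and the paper invokes \cite[Theorem E]{robin} for this step.
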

\begin{proof} Let $\L=(H^0\mu_+\M)/\U$. By looking at the presentations for $\L$ and $\U$, since $\M$ is relative holonomic it follows that $\L$ is also relative holonomic. The fact that $j(\L)>n$ is equivalent to the fact that $\Ann_{\C[s]}\L\not=0$. By \cite[Theorem E]{robin}  for any $\alpha$ in the zero locus of $\Ann_{\C[s]}\L$,
\begin{equation}\label{tensor}
\L\otimes_{\C[s]}\frac{\C[s]}{\mathfrak{m}_\alpha}\not=0,
\end{equation}
where $\mathfrak{m}_\alpha$ is the maximal ideal corresponding to $\alpha$. Let $\C_\alpha=\C[s]/\mathfrak{m}_\alpha$. To prove that $\Ann_{\C[s]}\L\not=0$ it thus suffices that $\L\otimes_{\C[s]}\C_\alpha=0$ for some $\alpha\in\C^r$.

We consider the exact sequence of $\sD_X$-modules
\be\label{eqss}\U\otimes \C_\alpha\to \left(H^0\mu_+\M\right)\otimes\C_\alpha\to \L\otimes_{\C[s]}\C_\alpha\to 0.\ee
By Lemma \ref{commutativity}, $\left(H^0\mu_+\M\right)\otimes\C_\alpha=H^0\mu_+(\M\otimes\C_\alpha)$ for all $\al\in\bC^r$ outside a finite union of hyperplanes of type $\{z_i-\beta_{ij}=0\}$ with $\beta_{ij}\in\bC$. Among such $\al$, we pick now $\al\in\bZ^r$ satisfying that each $\al_i\ll 0$; for example, $\al=\al'-\bk$ with $\bk=(k,\ldots,k)\in\bZ^r$ for $k\in\bN$ arbitrarily large with respect to a fixed $\al'\in\al+\bZ^r$.

We consider the diagram
\[
\begin{tikzcd}
U=X\setminus D \arrow[r,"j'"]\arrow[dr,"j"]& Y\arrow[d,"\mu"]\\
& X
\end{tikzcd}
\]
where $j$ and $j'$ are the natural open embeddings. Since $\al_i\ll 0$ for each $i$, there is an  equality of regular holonomic right $\sD_Y$-modules
$$
\M\otimes_{\C[s]} \C_\al = (\sD_X[s]G^s\otimes_{\cO_Y}\omega_Y)\otimes_{\C[s]}\C_\al
$$
which can be checked locally. The last right $\sD_Y$-module corresponds to the regular holonomic left $\sD_Y$-module $\sD_Y[s]G^s\otimes_{\bC[s]}\bC_\al$. Moreover, there is an isomorphism of left $\sD_Y$-modules
 $$\sD_Y[s]G^s\otimes_{\bC[s]}\bC_\al\simeq
\sD_YG^\al  =\cO_Y[g^{-1}]= j'_+(\sD_Ug^{-1})$$
with $g=\prod_{j=1}^rg_j$, and their associated  de Rham complexes are isomorphic to the perverse sheaf $Rj'_*\bC_{U}[n]$,  see \cite[Theorem 2.5.1]{budur2020zeroI}. Since  $j$ is an affine morphism, the derived direct image $R\mu_*(Rj'_*\bC_{U}[n]) = Rj_*\bC_U[n]$ is also perverse and hence equal to the perverse $0$-direct image ${}^pR^0\mu_*(Rj'_*\bC_{U}[n])$. Equivalently, using the Riemann-Hilbert correspondence between regular holonomic $\sD$-modules and perverse sheaves, there is an isomorphism of left $\sD_X$-modules,
$$ 
(H^0\mu_+)\cO_Y[g^{-1}] \simeq \cO_X[f^{-1}] \simeq \D_XF^s \otimes_{\bC[s]} \bC_\al.
$$
In terms of right $\sD_X$-modules this gives, 
$$H^0\mu_+(\M\otimes_{\bC[s]}\bC_\al) \simeq \N\otimes_{\bC[s]}\bC_\al \simeq F^\al dx\cdot\sD_X.$$
Thus the first map in (\ref{eqss})  is the map $\U\otimes \C_\alpha=\D_{X}[s]u\otimes \C_\alpha\to F^\al dx\cdot\sD_X$  that sends $u$ to $F^\alpha dx$. Hence this map is surjective. This shows that $\L\otimes_{\C[s]}\C_\alpha=0$ as required.
\end{proof}

\subsection{Proof of Theorem \ref{thm: MainTheorem} - algebraic case.}  Let $\L=(H^0\mu_+\M)/\U$.
  The Bernstein-Sato ideals $B_{\L t^n}$ form an increasing sequence of ideals in the Noetherian ring $\C[s]$.
  Hence there must exist some $N\geq 1$ such that $B_{\L t^n} = B_{\L t^{n+1}}$ for all $n\geq N$. 

  By \Cref{gradeBound} the $\D_X[s]$-module $\L$ has grade   $\ge n + 1$, so Lemma \ref{rem: GradeIFFBernsteinIdeal} provides some non-zero $q(s_1,\ldots,s_r) \in B_{\L}$.
	Then also $q\in B_{\L t^N}$.
  Observe that one has the  relation
  $$q(s_1,\ldots,s_r)t = tq(s_1+a_1,\ldots, s_r + a_r).$$
  In particular it follows that $q(s + a)\in B_{\L t^{N+1}}$.
  Due to the stabilisation $B_{\L t^N} = B_{\L t^{N+1}}$ it follows by iteration that $q(s+ja)\in B_{\L t^N}$ for any integer $j\geq 0$.
  Due to the estimate for the slopes in \Cref{thrmMoreA} it follows that we can pick some polynomial $r(s)$ which annihilates $\L t^N$ and such that $r(s+a)$ does not vanish on any codimension one irreducible component of $Z(B_{F,x}^a)$.

We now follow closely \cite{kashiwara1976b} and \cite{lichtin1989poles}.
  Let $b(s)$ be the Bernstein-Sato polynomial for $\M/\M t$ provided by \Cref{lem: BernsteinSatoPolynomialUpstairs}. Notice that the action of $t$ is injective on $\M$. This means that the morphism
  $$\phi:\M\to \M: m_1\mapsto \text{the unique }m_2 \text{ such that }m_1b(s)=m_2t,$$
  is well-defined and $\D_Y$-linear, and that $b(s)=t\circ \phi:\M\to\M$ as a morphism of $\D_Y$-modules.  By functoriality we thus conclude that $b(s)=t\circ H^0\mu_+\phi$ as a morphism on $H^0\mu_+\M$. This implies that
  $$(H^0\mu_+\M) b(s) \subset H^0(\mu_+\M) t.$$
  Set $B := \prod_{j=0}^{N} b(s+ ja)$. Then with a similar argument applied  inductively we have that $(H^0\mu_+\M) B(s)\subset (H^0\mu_+\M) t^{N+1}$. Thus have
  $$(H^0\mu_+\M) B(s)r(s+a)\subset(H^0\mu_+\M) t^{N+1}r(s+a)=(H^0\mu_+\M) t^{N}r(s)t.$$
 Since $\L t^N=(H^0(\mu_+\M)t^N+\U)/\U$ and $r$ annihilates $\L t^N$, we have
 $$((H^0\mu_+\M)t^N+\U)r\subset \U,$$ 
 and hence $$(H^0\mu_+\M)t^Nr\subset ((H^0\mu_+\M)t^N+\U)r\subset \U.$$
In particular, since $\U\subset H^0\mu_+\M$ we have that $\U B(s)r(s+a)\subset \U t$, that is, $B(s)r(s+a)$ lies in the Bernstein-Sato ideal $B_{\U/\U t}$.

	By \Cref{lem: SurjectionUF}  we have a $\D_X[s]\langle t\rangle$-linear surjection $\U \to \N$. Thus $\U/\U t$ surjects onto $\N/\N t$, and so $B(s)r(s+a)$ also annihilates $\N/\N t$.
	This implies that $Z(B_F^a)\subseteq Z(B(s)r(s+a))$. Since we know that none of the irreducible components of $Z(r(s+a))$ are irreducible components of codimension one of $Z(B_F^a)$, this gives the desired result. \hfill $\Box$

\subsection{The analytic case}
The proof of \Cref{thm: MainTheorem} proceeds similarly in the local analytic case, that is, when the smooth affine variety $X$ is replaced with the germ of a complex manifold $(X,x)$, or equivalently, with a very small open ball $\Omega_x$ centered at $x$ in $X$. 
By \cite[3.6]{budur2020zeroI}, all the results we have used for relative holonomic $\sD_X$-modules hold in the local analytic case. 
The log resolution $\mu:Y\to X=\Omega_x$ has the property that $Y$ admits a finite cover $\{Y_k\}$ of open subsets such that $g=f\circ\mu$ is a locally a monomial. Relative holonomicity can be defined for any analytic $\sD_Y[s]$-module admitting a good filtration on each $Y_i$, and by \cite[Theorem 1.17]{AnalyticDirectIm} the analytic direct image functor $\mu_+$ for such modules preserves relative holonomicity. Thus all the results from this section extend to the analytic version.

\begin{remark}
One lacks a bound on $c$ in \Cref{thm: MainTheorem} since $\N$ in the above proof is difficult to control.
\end{remark}

\section{Lower bounds}\label{secLB}

\subsection{Proof of Proposition \ref{prop1.3}.} Let $x$ be a smooth point of $C$. We can assume that $x_1$ is a local equation for $C$ at $x$. Then locally at $x$, $f_j=x_1^{N_j}u_j$ with $N_j=\ord_C(f_j)$ and $u_j$ a locally invertible function. We assume $m=\sum_{j=1}^rN_ja_j$ is non-zero. One easily computes now that $B_{F,x}^a$ is the principal ideal generated by $$b(s)=\prod_{c=1}^m \left(\left(\sum_{j=1}^r N_js_j\right)+ c \right)$$
corresponding to the relation $$b(s)\prod_{j=1}^rf_j^{s_j}= \pa_1^m(\prod_{j=1}^r u_j^{a_j})^{-1}\prod_{j=1}f_j^{s_j}.$$  \hfill $\Box$

\subsection{Jumping walls}\label{sec: JumpingWall}
In this subsection we establish \Cref{thm: JumpingWall} on the relation between the jumping walls and $Z(B_F^a).$
By \cite[Corollary 3.12]{kollar1997singularities} one can rephrase the $\LCT$-polytope and $\KLT_a$-region  as
\begin{align*}
    \LCT(F) &=  \{\lambda\in \mathbb{R}_{\geq 0}^r : (X,F^\lam) \text{ is log-canonical}\}\\
    \KLT_{a}(F)&= \{\lam\in \mathbb{R}_{\geq 0}^r: (X,F^{\lam - a}) \text{ is Kawamata log-terminal}\}.
\end{align*}
For our purposes the analytical reformulation of Kawatama log-terminality from  \cite[Proposition 3.20]{kollar1997singularities} is the most convenient,
$$\KLT_a(F) = \{\lambda\in \mathbb{R}_{\geq 0}^r : \prod_{j=1}^r \abs{f_j}^{-2(\lambda_j-a_j)}\text{ is integrable near any }x\in X\}.$$
Similarly, the stalk of the mixed multiplier ideal sheaf $\mathcal{J}(F^\lambda)$ for $\lambda\in \mathbb{R}^r_{\geq 0}$ at  any $x\in X$ is
$$\mathcal{J}(F^\lambda)_x = \{\phi \in \O_{X,x}: \abs{\phi}^2 \prod_{j=1}^r \abs{f_j}^{-2\lambda_j} \text{ is integrable near }x\}.$$
Let $\mu$ be a strong log resolution of $f$ as in the introduction,  $G = F\circ\mu$, and let $g_j=f_j\circ\mu$.

\begin{proof}[Proof of Theorem \ref{thm: JumpingWall}.] Let $E$ be an irreducible component of $\mu^*D$. Suppose that the hyperplane $\{\sum_{j=1}^r \ord_E(g_j)s_j = k_E + c\}$ for some $c\in\bZ_{>0}$ is the affine span of a facet $\sigma$ of a jumping wall of $F$ which intersects $\KLT_{a}(F)$. 
  We show that $\sum_{j=1}^r \ord_E(g_j)s_j + k_E + c = 0$ determines an irreducible component of $Z(B_F^a)$.

Note that the facet $\sigma$ must be included in $\KLT_{a}(F)$. Let $\lambda$ be a point of  $\sigma$. Then there must exist some $x\in D$ and $\phi\in \O_{X,x}\setminus \mathcal{J}(F^\lam)_x$ such that
  $$\int \abs{\phi}^2\prod_{j=1}^r\abs{f_j}^{-2(\lambda_j -\varepsilon_j)}\psi dxd\bar x <  \infty, \qquad \int \prod_{j=1}^r \abs{f_j}^{-2(\lambda_j - a_j)}\psi dxd\bar x< \infty.$$
  for any $\varepsilon \in \mathbb{R}_{>0}^r$ and positive bump function $\psi$ supported on a sufficiently small neighbourhood of $x$, where $dx=dx_1\ldots dx_n$ for local coordinates $x_1,\ldots, x_n$ on $X$.

  Pick some $b(s) \in B_F^a$ and take the support of $\psi$ to be sufficiently small such that there exists some local differential operator $P$ with $b(s)F^s = PF^{s+a}$.
  By conjugation it follows that $\overline{b}(s) \overline{F}^s = \overline{P}\overline{F}^{s+a}$.
  Holomorphic and antiholomorphic differential operators commute so
  $$\abs{b(s)}^2 \prod_{j=1}^r \abs{f_j}^{2s_j} = P\overline{P} \abs{f_j}^{2(s_j + a_j)}.$$
  Now assume that the real part of all $2(s_j+a_j)$ is strictly greater than the order of $P$. Then $\abs{f_j}^{2(s_j+a_j)}$ has enough continuous derivatives to apply integration by parts.
  This yields that
  $$\abs{b(s)}^2 \int \prod_{j=1}^r\abs{f_j}^{2s_j}|\phi|^2 \psi dxd\bar x= \int \prod_{j=1}^r  \abs{f_j}^{2(s_j+a_j)} P^*\overline{P^*}\abs{\phi}^2\psi dxd\bar x$$
  View this as an equality of meromorphic functions of $s$ to conclude that the equality holds for arbitrary $s\in \mathbb{R}^r$ provided both integrals are finite.

  Now take $s = -\lambda + \varepsilon$ and let $\varepsilon$ tend to zero from above.
  Then, by dominated convergence, the integral on the right hand side converges to a finite number.
  On the other hand, since $\phi$ is not in $\mathcal{J}(F^s)_x$, the integral on the left hand side tends to infinity by the monotone convergence theorem.
  This means that the equality is only possible if $b(s)$ vanishes on $(-\lambda_1,\ldots, -\lambda_r)$.
  Since the point $\lambda$ is arbitrary on $\sigma$,  and $b(s)\in B_F^a$ is also arbitrary, we conclude that $\sum_j \ord_{E}(g_j)s_j +  k_E + c=0$ determines an irreducible component of $Z(B_F^a)$.
\end{proof}

\begin{proof}[Proof of Corollary \ref{thm: LCT}.]   A facet of $\LCT(F)$ is by definition a facet of a jumping wall of $F$. By \Cref{thm: JumpingWall} it is enough to show that  $\sum_{j=1}^r \ord_E(g_j)s_j = k_E + 1$ intersects $\KLT_a(F)$.  Let $\lam$ be an interior point of this facet of $\LCT(F)$. It is enough to show $F^{\lam-a}$ is Kawamata log-terminal. Let $E'$ be an irreducible component of $\mu^*D$. Then $\sum_j\ord_{E'}(f_j)\lam_j\le k_{E'}+1$. Equality holds if and only if $E'$ determines the same facet of $\LCT(F)$ as $E$, that is, 
$$
\left\{\sum_{j=1}^r \ord_{E'}(g_j)s_j = k_{E'} + 1\right\} =\left\{\sum_{j=1}^r \ord_E(g_j)s_j = k_E + 1\right\}.
$$
Let $I_E$ be the set of such $E'$. By  assumption,  there exists at least one $j$ with $ \ord_{E}(f_j)\cdot a_j\neq 0$. This implies that for the same $j$, the same holds for $E'\in I_E$. Thus for $E'\in I_E$ we have  $\sum_j \ord_{E'}(f_j) a_j> 0$ since $a\in\bN^r$. Hence for all irreducible components $E'$ of $\mu^*D$ one has
$$
\sum_j\ord_{E'}(f_j)(\lam_j-a_j)<k_{E'}+1
$$
as claimed.
\end{proof}

\subsection{Real jumping walls}\label{sec: RealJump}
Finally, we establish the real analogues for the results in \ref{sec: JumpingWall}. As mentioned in the introduction, one of the motivations is that this gives sometimes a different way of producing irreducible components of the zero loci of Bernstein-Sato ideals, another motivation being the potential applications to statistics.

Let $X_\mathbb{R}$ be a real affine algebraic manifold. Let $F = (f_1,\ldots,f_r)$ be a tuple of real algebraic functions on $X_{\mathbb{R}}$.
Fix  $a\in \mathbb{Z}_{\geq 0}^r$ and assume that $\prod_{j=1}^r f_j^{a_j}$ is not invertible.

The  Bernstein-Sato ideal $B_{F}^a\subset \bR[s]$, with $s=s_1,\ldots, s_r$,
consists by definition of all polynomials $b(s)\in \mathbb{R}[s]$ such that
$$b(s)F^s \in \D_{X_\mathbb{R}}[s]F^{s+a}$$
where $\D_{X_\mathbb{R}}$ denotes the ring of real algebraic differential operators on $X_\mathbb{R}$. 
If $F_\bC$ is the complexification on $X_\bC=X_\bR\otimes_\bR \bC$ of $F$, it is easy to see that $B_{F}^a$ consists of all polynomials obtained by replacing the coefficients of $q(s)$ with their real parts for all $q(s)\in B_{F_\bC}^a$. It is conjectured in \cite{B-ls} that $B_{F_\bC}^a$ is generated by polynomials with coefficients in $\bQ$, in which case the same polynomials would generate $B_F^a$. Since this conjecture is open, for now we can only conclude from Theorem \ref{thrmMoreA} the following:

\begin{lemma}\label{lemXr} Let $X_\mathbb{R}$ be a real affine algebraic manifold. Let $F = (f_1,\ldots,f_r)$ be a tuple of real algebraic functions on $X_{\mathbb{R}}$. Let $F_\bC$ be the $F$ considered as having complex coefficients. Fix  $a\in \mathbb{Z}_{\geq 0}^r$ and assume that $\prod_{j=1}^r f_j^{a_j}$ is not invertible. Then the codimension-one part of $Z(B_{F_\bC}^a)$ in $\bC^r$ consists of the complexification of the real codimension-one part of the  zero locus  $Z(B_F^a)$ in $\bR^r$.
\end{lemma}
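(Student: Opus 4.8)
The plan is to leverage the already-established relation between the real Bernstein-Sato ideal and its complex counterpart, namely that $B_F^a$ consists exactly of the polynomials $\operatorname{Re}(q(s))$ (coefficient-wise real part) for $q(s) \in B_{F_\bC}^a$, and then combine this with the hyperplane description of the codimension-one part of $Z(B_{F_\bC}^a)$ coming from \Cref{thrmMoreA}(1). Concretely, by \Cref{thrmMoreA}(1), every codimension-one irreducible component of $Z(B_{F_\bC}^a)$ is a hyperplane $\{\ell_w = 0\}$ where $\ell_w = l_{w,1}s_1 + \cdots + l_{w,r}s_r + b_w$ with $l_{w,j} \in \bQ_{\geq 0}$ and $b_w \in \bQ_{>0}$, so in particular $\ell_w$ has rational — hence real — coefficients. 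First I would write the radical of the principal-codimension part: the ideal of the codimension-one part of $Z(B_{F_\bC}^a)$ is $\bigcap_w (\ell_w)$, a principal ideal generated by the product $P(s) := \prod_w \ell_w(s)$, which again has real coefficients.

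Next I would argue in both directions. For the inclusion ``complexification of $Z(B_F^a)^{\mathrm{codim}\,1} \subseteq Z(B_{F_\bC}^a)^{\mathrm{codim}\,1}$'': since $B_F^a \subseteq B_{F_\bC}^a$ (a real differential operator is in particular a complex one), we have $Z(B_{F_\bC}^a) \subseteq Z(B_F^a)_\bC$ as sets, but I actually need the reverse containment on the codimension-one strata, so the right statement to prove is that each real hyperplane component of $Z(B_F^a)$ complexifies to a component of $Z(B_{F_\bC}^a)$, and conversely each component of $Z(B_{F_\bC}^a)$ is already defined over $\bR$ and its real points form a component of $Z(B_F^a)$. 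For the latter: given $q(s) \in B_{F_\bC}^a$, its coefficient-wise real part $\operatorname{Re}(q)$ lies in $B_F^a$ by the stated reformulation; applying this to $q = P$ (which has real coefficients, so $\operatorname{Re}(P) = P$) shows $P \in B_F^a$, hence the real zero locus $\{P = 0\} \cap \bR^r$, which is exactly the union of the real hyperplanes $\{\ell_w = 0\}$, is contained in $Z(B_F^a)$; since these hyperplanes are codimension one in $\bR^r$ and $Z(B_F^a) \subseteq Z(B_{F_\bC}^a)_\bC$ cannot have larger codimension-one part, they are exactly the codimension-one components of $Z(B_F^a)$. For the forward direction: if $H \subseteq \bR^r$ is a codimension-one component of $Z(B_F^a)$, then pick $b(s) \in B_F^a$ not vanishing identically on $H$'s complexification is impossible, so $H_\bC \subseteq Z(B_{F_\bC}^a)_\bC \cap \{$real locus$\}$... more cleanly: $B_F^a \subseteq B_{F_\bC}^a$ gives $Z(B_{F_\bC}^a) \subseteq Z(B_F^a)_\bC$, and since by the first half $Z(B_F^a)_\bC$ and $Z(B_{F_\bC}^a)$ have the same codimension-one part (the complexification of $\bigcup_w (\{\ell_w=0\}\cap\bR^r)$), every codimension-one component of $Z(B_F^a)$ is the real trace of a unique component $\{\ell_w = 0\}$ of $Z(B_{F_\bC}^a)$, and every such $\ell_w$, having rational coefficients, has non-empty real zero locus of real codimension one.

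The main obstacle — really the only subtle point — is bookkeeping about whether ``codimension-one part'' of a real algebraic set behaves compatibly with complexification, i.e.\ that a real polynomial $P$ with $\bQ$-coefficients defines a set whose complexification is $\{P=0\}_\bC$ and whose real codimension-one components correspond bijectively (via complexification) to the $\bC$-codimension-one components $\{\ell_w = 0\}$ of $\{P = 0\}_\bC$. This is elementary but worth spelling out: a linear form with rational coefficients is irreducible over $\bC$, its zero locus is a genuine hyperplane in $\bC^r$ with real points forming a hyperplane in $\bR^r$, and distinct $\ell_w$ (up to scalar) give distinct hyperplanes; so the correspondence is clean, and no component of $Z(B_{F_\bC}^a)$ of higher codimension contributes. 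I would also remark that the hypothesis $\prod_j f_j^{a_j}$ not invertible guarantees $Z(B_F^a)$ is non-trivial via \Cref{prop1.3} or \Cref{thrmMoreA}, so the statement is not vacuous, though this is not strictly needed for the equivalence of the two codimension-one strata. The whole argument is thus a short combination of the coefficient-real-part reformulation (already proved in the excerpt) with the rationality of slopes in \Cref{thrmMoreA}.
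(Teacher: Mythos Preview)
Your overall strategy---combine the real-part description of $B_F^a$ with the rationality of the hyperplane equations from \Cref{thrmMoreA}(1)---is exactly what the paper has in mind (the paper states the lemma as an immediate consequence and gives no further proof). However, your execution contains two genuine errors.

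First, you write ``applying this to $q = P$ \ldots\ shows $P \in B_F^a$'', implicitly assuming $P = \prod_w \ell_w \in B_{F_\bC}^a$. This is unjustified and in general false: $(P)$ is the radical of the codimension-one part of $Z(B_{F_\bC}^a)$, so $B_{F_\bC}^a \subseteq (P)$, but $B_{F_\bC}^a$ can have higher-codimension primary components and non-reduced structure, so $P$ need not lie in $B_{F_\bC}^a$. Second, even granting $P \in B_F^a$, the conclusion you draw, ``hence $\{P = 0\} \cap \bR^r \subseteq Z(B_F^a)$'', is backwards: membership $P \in B_F^a$ gives $Z(B_F^a) \subseteq \{P = 0\}$, not the reverse.

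The clean fix avoids $P$ entirely. From the real-part description you cite, observe that for $q \in B_{F_\bC}^a$ both $\operatorname{Re}(q)$ and $\operatorname{Im}(q) = \operatorname{Re}(-iq)$ lie in $B_F^a$; hence $B_{F_\bC}^a = B_F^a \otimes_\bR \bC$. Thus $Z(B_{F_\bC}^a) \subset \bC^r$ is the base change of the $\bR$-scheme $V(B_F^a)$, and its irreducible components are permuted by complex conjugation. By \Cref{thrmMoreA}(1) each codimension-one component $\{\ell_w = 0\}$ has $\ell_w \in \bQ[s]$, hence is conjugation-stable and corresponds to a single $\bR$-irreducible component of $V(B_F^a)$, whose real points form the real hyperplane $\{\ell_w = 0\} \cap \bR^r$. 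Conversely any codimension-one $\bR$-component of $V(B_F^a)$ base-changes to a Galois orbit of codimension-one components of $Z(B_{F_\bC}^a)$; since each of these is already defined over $\bQ$, the orbit is a singleton, and the component is one of the $\{\ell_w = 0\}$. This gives the desired bijection.
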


A similar comparison holds between the local Berstein-Sato ideals $B_{F,x}^a$ and $B_{F_\bC,x}^a$
for $x\in X_\bR$,  where $B_{F,x}^a$ consists of all polynomials $b(s)\in \mathbb{R}[s]$ such that
$$b(s)F^s \in \D_{X_{\mathbb{R}},x}[s]F^{s+a}$$
where $\D_{X_\mathbb{R},x}[s]$ denotes the ring of germs at $x$ of real analytic differential operators on $X_\mathbb{R}$. Moreover, as in the complex affine case,  $B_F^a$ is the intersection of $B_{F,x}^a$ for $x\in X_\bR$.

Denote by $\O_{X_\mathbb{R}}$  the sheaf of real analytic functions on $X_\mathbb{R}$. After Saito \cite{RealLogCan}, we define the {\it real mixed multiplier ideals sheaves}  $\cJ_\bR(F^\lam)\subset \cO_{X_\bR}$ for $\lam\in\bR^r_{\ge 0}$ by setting 
$$
\mathcal{J}_{\mathbb{R}}(F^\lambda)(U): =\left\{\phi\in \cO_{X_\bR}(U) :  \abs{\phi}\prod_{j=1}^r \abs{f_j}^{-\lambda_j} \text{ is locally integrable on }U\right\}.
$$

Let $\mu:Y_\mathbb{R}\to X_\mathbb{R}$ be a real log resolution of singularities for $f:=\prod_{j=1}^rf_j$, that is, $\mu^* f$ and $\mu^*dx_1\ldots dx_n$ are locally monomial up to multiplication by an invertible function, where $x_1,\ldots, x_n$ are local algebraic coordinates on $X_\bR$. Since $X_\bR$ is assumed to be the underlying real analytic manifold of a smooth scheme $X$ defined over $\bR$, $Y_\bR$ is the underlying real analytic manifold of a smooth scheme $Y$ obtained by blowing up $X$ successively along smooth centers defined over $\bR$. Then the components of the divisor determined by $\mu^*f$ in $Y_\bR$ are the non-empty real loci of the components of the divisor defined by $f$ in $Y$, see \cite[1.2]{RealLogCan}. As before, we denote $k_{E}:=\ord_{E}(\det \operatorname{Jac}(\mu))\in\bN$ for the order of vanishing of the determinant of the Jacobian of $\mu$ along an irreducible component $E$ of the simple normal crossings divisor determined by $\mu^*f$ in $Y_\bR$.

Fix some $x\in X_\bR$ with $f(x)=0$. Associated to $\lambda\in\bR^r_{\ge 0}$ is the region $$\mathcal{R}_{\mathbb{R},F,x}(\lambda) := \{\lambda' \in \mathbb{R}_{\geq 0}^r : \mathcal{J}_\mathbb{R}(F^\lambda)_x \subseteq \mathcal{J}_\mathbb{R}(F^{\lambda'})_x\}.$$
The {\it real jumping wall at $x$} associated to $\lambda$ is the intersection of the boundary of $\mathcal{R}_{\mathbb{R},F,x}(\lambda)$ with $\mathbb{R}_{>0}^r$. 
The {\it $\RLCT$-polytope at $x$}  is the closure $\RLCT_x(F)$ of $\mathcal{R}_{\mathbb{R},F,x}(0)$.
The stalk $\mathcal{J}_{\mathbb{R}}(F^\lam)_x$ admits a characterization similar to the complex case, see \cite[Proposition 1]{RealLogCan}.
It follows that the facets of the jumping wall are cut out by hyperplanes of the form $\sum_{j=1}^r\ord_E(g_j)s_j = k_E + c$ with $c\in \mathbb{Z}_{>0}$ and the $\RLCT$-polytope is cut out by hyperplanes of the form $\sum_{j=1}^r\ord_E(g_j)s_j = k_E + 1$. 
Here, $E$ runs over all irreducible components of the divisor determined by $\mu^* f$ with $x\in\mu(E)$.
The {\it $\RKLT_{a}$-region} is defined by
$$\RKLT_{a,x}(F) := \{\lambda\in \mathbb{R}_{\geq 0}^r : \prod_{j=1}^r \abs{f_j}^{-(\lambda_j-a_j)}\text{ is integrable near }x\}.$$
The following theorem now follows similarly to \Cref{thm: JumpingWall}.

\begin{theorem}\label{thmRs} With the assumptions as in Lemma \ref{lemXr}, and with $x\in f^{-1}(0)\subset X_\bR$, if a facet of a real jumping wall of $F$ at $x$ intersects $\RKLT_{a,x}(F)$, then it determines an irreducible component of $Z(B_{F,x}^a)$.
\end{theorem}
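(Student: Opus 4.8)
The plan is to transcribe the proof of Theorem~\ref{thm: JumpingWall}, replacing its complex-analytic ingredients by real-analytic ones: $X$ by $X_\bR$, $\O_X$ by $\O_{X_\bR}$, the mixed multiplier ideal $\mathcal{J}(F^\lambda)$ by $\mathcal{J}_{\mathbb{R}}(F^\lambda)$, $\KLT_a(F)$ by $\RKLT_{a,x}(F)$, and the weights $\abs{\cdot}^2,\ dx\,d\bar x$ by $\abs{\cdot},\ dx$. In detail: let $E$ be an irreducible component of the divisor of $\mu^* f$ with $x\in\mu(E)$, and suppose $\{\sum_j\ord_E(g_j)s_j=k_E+c\}$, $c\in\bZ_{>0}$, is the affine span of a facet $\sigma$ of a real jumping wall of $F$ at $x$ meeting $\RKLT_{a,x}(F)$; as in the complex case, $\sigma\subseteq\RKLT_{a,x}(F)$. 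For a fixed $\lambda\in\sigma$ and a sufficiently small positive bump function $\psi$ near $x$ there is, exactly as in the complex case, a real-analytic germ $\phi\notin\mathcal{J}_{\mathbb{R}}(F^\lambda)_x$ with $\int\abs{\phi}\prod_j\abs{f_j}^{-(\lambda_j-\varepsilon_j)}\psi\,dx<\infty$ for every $\varepsilon\in\bR_{>0}^r$ and $\int\prod_j\abs{f_j}^{-(\lambda_j-a_j)}\psi\,dx<\infty$. One then shows that every $b(s)\in B_{F,x}^a$ vanishes at $(-\lambda_1,\dots,-\lambda_r)$; letting $\lambda$ range over $\sigma$, this places the hyperplane $\{\sum_j\ord_E(g_j)s_j+k_E+c=0\}$ inside $Z(B_{F,x}^a)$, and since this hyperplane is irreducible of codimension one, it is then a component of $Z(B_{F,x}^a)$.

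The only genuinely new point is the functional-equation step, and there the real case is in fact slightly simpler, since there is no complex conjugate and hence no companion operator $\overline P$. After shrinking $\supp\psi$ so that $b(s)F^s=PF^{s+a}$ for a real-analytic differential operator $P$ of order $m$, one observes that $P$ acts on the honest positive function $\prod_j\abs{f_j}^{s_j+a_j}$ on $X_\bR\setminus f^{-1}(0)$ by exactly the same formula it obeys on the symbol $F^{s+a}$, since each $\partial_i$ acts on $\abs{f_j}^{s_j+a_j}$ by multiplication by $(s_j+a_j)(\partial_i f_j)/f_j$, just as on $f_j^{s_j+a_j}$; consequently $P\prod_j\abs{f_j}^{s_j+a_j}=b(s)\bigl(\prod_j\operatorname{sgn}(f_j)^{a_j}\bigr)\prod_j\abs{f_j}^{s_j}$ there, the sign arising from $\prod_j\abs{f_j}^{a_j}/\prod_j f_j^{a_j}=\prod_j\operatorname{sgn}(f_j)^{a_j}$. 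For $\Re s_j>m$ the function $\prod_j\abs{f_j}^{s_j+a_j}$ is of class $C^m$ across $f^{-1}(0)$, so this identity holds on a whole neighbourhood of $x$. The sign is harmless: it is locally constant on $X_\bR\setminus f^{-1}(0)$, so one runs the argument on the connected component $U$ that carries the divergent integral, where the sign is a constant and where integration by parts has no boundary contribution once $\Re s_j>m$. Then, exactly as in the complex proof, one multiplies by $\abs{\phi}\,\psi$, integrates over $U$, moves $P$ to the test side by integration by parts, specialises to $s=-\lambda+\varepsilon\mathbf{1}$, and lets $\varepsilon\to 0^+$: the right-hand side $\int_U\prod_j\abs{f_j}^{s_j+a_j}P^*(\abs{\phi}\,\psi)\,dx$ stays finite by dominated convergence together with $\lambda\in\RKLT_{a,x}(F)$, while $\int_U\prod_j\abs{f_j}^{s_j}\abs{\phi}\,\psi\,dx\to+\infty$ by monotone convergence because $\phi\notin\mathcal{J}_{\mathbb{R}}(F^\lambda)_x$, forcing $b(-\lambda)=0$.

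The step I expect to be the main obstacle is this integration by parts against $\abs{\phi}$. In the complex case the test weight $\abs{\phi}^2=\phi\bar\phi$ is automatically real-analytic, so that $P^*\overline{P^*}(\abs{\phi}^2\psi)$ is an honest bounded function; over $\bR$ the weight $\abs{\phi}$ is only Lipschitz, and $P^*(\abs{\phi}\,\psi)$ is a priori a distribution whose singular part is a surface measure supported on the locus where $\phi$ changes sign. I would remove this by choosing $\phi$ so that its sign-change locus lies inside $f^{-1}(0)$ near $x$ — which is possible because the jump from $\mathcal{J}_{\mathbb{R}}(F^\lambda)_x$ to $\mathcal{J}_{\mathbb{R}}(F^{\lambda-\varepsilon\mathbf{1}})_x$ is detected by the vanishing order along a component of $\mu^* f$, so a suitable $\phi$ may be taken with zero locus contained, near $x$, in $\{f=0\}$ — whereupon the singular part of $P^*(\abs{\phi}\,\psi)$, paired against $\prod_j\abs{f_j}^{s_j+a_j}$, contributes a meromorphic function of $s$ that is identically zero, the integrand vanishing on $\{f=0\}$ for $\Re s_j>m$. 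A variant is to approximate $\abs{\phi}$ from below by the real-analytic functions $\phi^2(\phi^2+\delta)^{-1/2}$, run the argument for each $\delta>0$, where all the integrals converge absolutely, and let $\delta\to 0^+$. Once this is settled, the remainder is a word-for-word copy of the proof of Theorem~\ref{thm: JumpingWall}.
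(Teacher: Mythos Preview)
Your overall strategy is precisely the paper's: the paper's proof is the same line-by-line transcription of the complex argument you describe, writing $b(s)\prod_j|f_j|^{s_j}=P\prod_j|f_j|^{s_j+a_j}$, integrating by parts against $|\phi|\psi$, and invoking dominated and monotone convergence. You are in fact more scrupulous than the paper on two points it passes over in silence: the locally constant factor $\prod_j\operatorname{sgn}(f_j)^{a_j}$ in the functional equation for $\prod_j|f_j|^{s_j}$, and the failure of $|\phi|$ to be differentiable across $\{\phi=0\}$.

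The obstacle you isolate is genuine, but neither of your two fixes removes it. For fix~(a), it is not true in general that a witness $\phi$ for the jump can be chosen with $\{\phi=0\}\subset\{f=0\}$ near $x$: take $r=1$, $a=1$, $f=x_1^2+x_2^2$ on $\bR^2$ at the origin; then $\{f=0\}_\bR=\{0\}$, the real jumping number $\lambda=3/2$ lies in $\RKLT_{1,0}(f)=[0,2)$, and every witness $\phi\in\mathfrak m\setminus\mathfrak m^2$ has a one-dimensional real zero set through $0$, so its sign-change locus cannot be confined to $\{0\}$. For fix~(b), although each $\phi_\delta=\phi^2(\phi^2+\delta)^{-1/2}$ is real-analytic, the higher derivatives of $\phi_\delta$ blow up along $\{\phi=0\}$ as $\delta\to0^+$, so there is no $\delta$-uniform integrable majorant for $\prod_j|f_j|^{-(\lambda_j-a_j)}\,P^*(\phi_\delta\psi)$, and the right-hand side need not stay finite in the limit. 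In the same example, with $\phi=x_1$ and $P=\tfrac14\Delta$, the singular part of $P^*(|x_1|\psi)$ is $\tfrac12\delta(x_1)\psi$, whose pairing with $|f|^{-(\lambda-1)}=(x_1^2+x_2^2)^{-1/2}$ is $\tfrac12\int|x_2|^{-1}\psi(0,x_2)\,dx_2=+\infty$; thus both sides of the would-be identity diverge at $s=-3/2$ and no contradiction with $b_f(-3/2)=(-\tfrac12)^2\ne 0$ is produced. The paper's proof, which applies $P^*$ to $|\phi|\psi$ without comment, carries the same unaddressed issue; what makes the argument run cleanly is an $L^2$-type weight $\phi^2$ in place of $|\phi|$, since $\phi^2$ is real-analytic and $P^*(\phi^2\psi)$ is then an honest bounded function.
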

\begin{proof}
Let  $\sigma$ be a facet of a real jumping wall of $F$ at $x$ which intersects $\RKLT_{a,x}(F)$. The affine span of $\sigma$ must be a hyperplane of the form $\sum_{j=1}^r \ord_E(g_j)s_j = k_E + c$ with $c\in\bZ_{>0}$, where $E$ is an irreducible component of the divisor determined by $\mu^* f$ with $x\in\mu(E)$. We show that $\sum_{j=1}^r \ord_E(g_j)s_j + k_E + c = 0$ determines an irreducible component of $Z(B_{F,x}^a)$.

  Let $\lambda$ be a point on $\sigma$. Then there must exist  $\phi\in \O_{X_\bR,x}\setminus \mathcal{J}_\mathbb{R}(F^s)_x$ such that
  $$\int \abs{\phi}\prod_{j=1}^r\abs{f_j}^{-(\lambda_j -\varepsilon_j)}\psi dx <  \infty, \qquad \int \prod_{j=1}^r \abs{f_j}^{-(\lambda_j - a_j)}\psi dx< \infty.$$
  for any $\varepsilon \in \mathbb{R}_{>0}^r$ and positive bump function $\psi$ supported on a sufficiently small neighbourhood of $x$, where $dx=dx_1\ldots dx_n$ and $x_1,\ldots, x_n$ are local coordinates on $X_\bR$ at $x$.

  Pick some $b(s) \in B_{F,x}^a$ and take the support of $\psi$ to be sufficiently small such that there exists some local differential operator $P\in\sD_{X_\bR,x}[s]$ with $b(s)F^s = PF^{s+a}$.
  Assume that the specialization of $s_j+a_j$ to a complex number has real part strictly greater than the order of $P$ for all $j$.
  Then $b(s) \prod_{j=1}^r \abs{f_{j}}^{s_j} = P \prod_{j=1}^r \abs{f_{j}}^{s_j +a_j}$ and $\abs{f_{j}}^{s_j +a_j}$ has enough continuous partial derivatives to apply integration by parts.
  This yields that
  $$b(s)\int  \prod_{j=1}^r \abs{f_{j}}^{s_j} \abs{\phi}\psi dx= \int \prod_{j=1}^r \abs{f_{j}}^{s_j +a_j} P^* \abs{\phi}\psi dx.$$
  View this as an equality of meromorphic functions in $s$ to deduce that the equality holds for arbitrary $s\in \mathbb{R}^r$ provided both integrals are finite.

  Now take $s = -\lambda + \varepsilon$ and let $\varepsilon$ tend to zero from above.
  Then, by dominated convergence, the integral on the right hand side stays finite as $\varepsilon$ tends to zero.
  On the other hand, by monotone convergence, the integral on the left hand side tends to infinity since $\phi$ is not in $\mathcal{J}_\mathbb{R}(F^s)_x$.
  This means that $b(s)$ vanishes on $(-\lambda_1,\ldots, -\lambda_r)$.
  Since the point $\lambda$ on the facet $\sigma$ and $b(s)\in B_{F,x}^a$ were arbitrary we conclude that $\sum_j \ord_{E}(g_j)s_j +  k_E + c=0$ determines an irreducible component of $Z(B_{F,x}^a)$.
\end{proof}

Precisely as with \Cref{thm: LCT} one obtains:

\begin{corollary}\label{corRs} With the same assumptions as in Theorem \ref{thmRs},
  suppose that the equation $\sum_{j=1}^r \ord_E(g_j)s_j = k_E + 1$ defines the affine span of a facet of $\RLCT_x(F)$. If $a_j\neq 0$ and $\ord_{E}(g_j)\neq 0$ for some $j$, then $\sum_j \ord_{E}(g_j)s_j +k_E + 1=0$ defines an irreducible component of $Z(B_{F,x}^a)$.
\end{corollary}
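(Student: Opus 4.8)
The plan is to imitate the proof of Corollary \ref{thm: LCT} line by line, with Theorem \ref{thmRs} replacing Theorem \ref{thm: JumpingWall} and the integrability definition of $\RKLT_{a,x}(F)$ replacing the Kawamata-log-terminal reformulation of $\KLT_a(F)$. The facet $\sigma$ of $\RLCT_x(F)$ with affine span $\{\sum_j\ord_E(g_j)s_j=k_E+1\}$ lies in the boundary of $\mathcal{R}_{\mathbb{R},F,x}(0)$, and (as in the complex case) the hypothesis that $\ord_E(g_{j_0})\ne 0$ for some index $j_0$ with $a_{j_0}\ne 0$ forces this affine span to differ from every coordinate hyperplane, so $\sigma$ meets $\bR^r_{>0}$ and is a facet of the real jumping wall at $\lambda=0$. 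Thus Theorem \ref{thmRs} reduces the corollary to showing that $\sigma$ intersects $\RKLT_{a,x}(F)$.

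First I would fix an interior point $\lambda$ of $\sigma$ and show $\lambda\in\RKLT_{a,x}(F)$, that is, that $\prod_{j}|f_j|^{-(\lambda_j-a_j)}$ is integrable near $x$. The tool is the real log resolution $\mu\colon Y_\bR\to X_\bR$ fixed before Theorem \ref{thmRs}: by the change of variables formula, this integrability is equivalent to local integrability of $\prod_j|g_j|^{-(\lambda_j-a_j)}\,|\det\operatorname{Jac}(\mu)|$ near $\mu^{-1}(x)$, and since $g_j$ and $\det\operatorname{Jac}(\mu)$ are locally monomial there (with exponents $\ord_{E'}(g_j)$ and $k_{E'}$ along the local components $E'$), this reduces to the system of strict inequalities $\sum_j\ord_{E'}(g_j)(\lambda_j-a_j)<k_{E'}+1$, ranging over all irreducible components $E'$ of the divisor of $\mu^*f$ with $x\in\mu(E')$ --- exactly the condition the complex proof extracts from a complex log resolution.

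It then remains to verify these inequalities, which is the same convexity bookkeeping as in Corollary \ref{thm: LCT}. If the hyperplane $\{\sum_j\ord_{E'}(g_j)s_j=k_{E'}+1\}$ is not the affine span of $\sigma$, then interiority of $\lambda$ in $\sigma$, together with the containment $\RLCT_x(F)\subseteq\{\sum_j\ord_{E'}(g_j)s_j\le k_{E'}+1\}$, gives $\sum_j\ord_{E'}(g_j)\lambda_j<k_{E'}+1$, and subtracting the nonnegative quantity $\sum_j\ord_{E'}(g_j)a_j$ preserves the inequality. If instead that hyperplane equals the affine span of $\sigma$, then the two coinciding affine hyperplanes have proportional normal vectors, with positive proportionality constant since their constant terms $k_E+1$ and $k_{E'}+1$ are positive, so $\ord_{E'}(g_{j_0})\ne 0$; hence $\sum_j\ord_{E'}(g_j)a_j\ge\ord_{E'}(g_{j_0})a_{j_0}>0$, and combined with $\sum_j\ord_{E'}(g_j)\lambda_j\le k_{E'}+1$ (valid because $\lambda\in\RLCT_x(F)$) this again yields the strict inequality. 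Therefore $\lambda\in\RKLT_{a,x}(F)$ and Theorem \ref{thmRs} applies.

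The only step that is not purely formal is the first reduction: transferring the real-analytic integrability near $x$ through $\mu$ to the monomial computation on $Y_\bR$, using the change of variables formula, the local normal form of a real log resolution, and the description of the components of $\mu^*f$ and the numbers $k_{E'}$ recalled before Theorem \ref{thmRs}. Once this is in place, the remainder is verbatim the convexity computation in the proof of Corollary \ref{thm: LCT}, which is why the corollary follows ``precisely as with'' that result.
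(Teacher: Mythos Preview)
Your proposal is correct and matches the paper's intended approach: the paper gives no separate proof and simply says the corollary follows ``precisely as with \Cref{thm: LCT},'' and you have carried out exactly that adaptation, supplying the change-of-variables step that converts the integrability definition of $\RKLT_{a,x}(F)$ into the divisorial inequalities (the real analogue of the step the paper handles via \cite[Proposition 1]{RealLogCan}). One small remark: your justification that the affine span $\{\sum_j\ord_E(g_j)s_j=k_E+1\}$ is not a coordinate hyperplane does not actually require the hypothesis on $a_{j_0}$ and $\ord_E(g_{j_0})$, since $k_E+1>0$ already rules this out; the hypothesis is only needed later, exactly where you use it in the second case of the convexity bookkeeping.
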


\section{Example}\label{secEx}

Let $f_1 =  y^2 -x^2 + x^3$ and $f_2 = y$ define the coordinate functions of the morphism $F:\C^2 \to \C^2$.
We compare the Bernstein-Sato zero locus $Z(B_F^a)$ for $a = (1,2)$ with the estimates we obtained in this article. Using  the library \texttt{dmodideal.lib} \cite{LLM}  in \texttt{SINGULAR} \cite{DGPS} yields the principal ideal $$B_F^a = \left((s_1+1)(s_2+1)(s_2+2) \prod_{l=2}^5(2s_1 +  s_2 + l)\right).$$
A strong log resolution $\mu:Y\to X$ may be found by use of one blowup. Let $E_j$ be the strict transform of $f_j=0$ for $j=1,2$, and let $E_0$ be the exceptional divisor.
Then \Cref{thm: MainTheorem} yields that
$$Z(B_F^a) \subseteq \bigcup_{l=1}^\infty Z(s_1 + l) \cup Z(s_2 + l) \cup Z(2s_1 + s_2 + l+ 1).$$
The trivial estimate \Cref{prop1.3} yields that
$$Z(s_1 + 1)\cup Z(s_2 + 1) \cup Z(s_2 + 2) \subseteq Z(B_F^a).$$

\noindent
We have
$$
  \KLT_a(F)= \{\lambda\in \mathbb{R}_{\geq 0}^r : \lambda_1 <2, \ \lambda_2 <3, \text{ and }\ 2\lambda_1 + \lambda_2 <6\},
$$
$$
\LCT(F) = \{\lambda\in \mathbb{R}_{\geq 0}^r :\lam_1\le 1, \lam_2\le 1, \text{ and }2\lam_1+\lam_2\le 2 \},
$$
see Figure \ref{fig1}.
Further, a polynomial $h\in\bC[x,y]$ belongs to the ideal $\mathcal{J}(F^\lambda)$ if and only if
$$\ord_{E_{1}}(h)\geq \lambda_1,\quad  \ord_{E_{2}}(h)\geq \lambda_2, \quad\text{ and }  \ord_{E_{0}}(h)\geq 2\lambda_1 + \lambda_2 -1.$$ Then 
\begin{align*}
\cJ(F^\lam) &=\bC[x,y]\quad\text{ for } \lam\in \LCT^o(F):=\LCT(F)\setminus (\{\lam_2=1\}\cup\{2\lam_1+\lam_2=2\}),\\\cJ(F^\lam) &=(x,y)\quad\text{ for }\lam\in [0,1)^2\setminus \LCT^o.
\end{align*}
By translating these two regions by integral vectors $(m_1,m_2)\in\bN^2$ one obtains the other regions of constancy of mixed multiplier ideals, the latter equal to  $(f_1^{m_1}f_2^{m_2})$ and, respectively, $(x,y)f_1^{m_1}f_2^{m_2}$. The jumping walls are depicted in Figure \ref{fig1}.  

All  irreducible components of $Z(B_F^a)$ arise from the facets of the jumping walls in this example. Hence the lower bound for $Z(B_F^a)$ following from \Cref{thm: JumpingWall} is tight. In this case the estimates coming from the real jumping walls at the origin are identical to the foregoing estimates.

\begin{figure}[H]
  \centering
  \begin{minipage}{.4\textwidth}
      \centering
      \includegraphics[width =0.5 \textwidth]{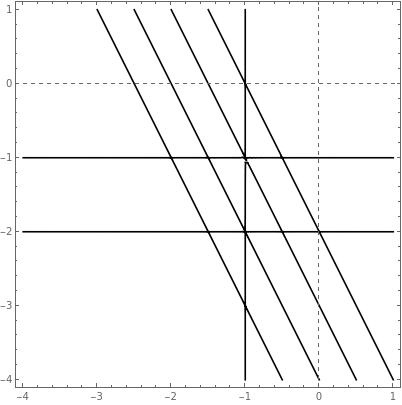}
  \end{minipage}
  \begin{minipage}{.4\textwidth}
      \centering
      \includegraphics[width = 0.5\textwidth]{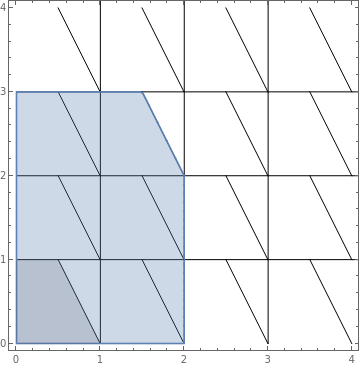}
  \end{minipage}
  \captionsetup{justification=centering}
  \caption{Left: $Z(B_F^a)$. Right: The jumping walls of $F$ with $\KLT_a(F)$ lightly shaded, and $\LCT(F)$ in darker shade.}
  \label{fig1}
\end{figure}

\end{document}